\documentclass[12pt]{amsart}

\usepackage[all]{xy}
\usepackage{amscd,amsmath,amsthm,amssymb}
\usepackage{mathtools}
\usepackage{mathrsfs}
\usepackage{tikz}

\definecolor{cadmiumgreen}{rgb}{0.0, 0.42, 0.24}
\usepackage[
colorlinks, citecolor=cadmiumgreen,
pagebackref,
pdfauthor={Robin de Jong}, 
pdfstartview ={FitV},
]{hyperref}
\hypersetup{
pdftitle={Frobenius' theta function and Arakelov invariants in genus three},
}

\usepackage[
backrefs,
msc-links,
nobysame,
lite,
]{amsrefs}

\usepackage[left=3.5cm,top=3.5cm,right=3.5cm]{geometry}
\setlength\marginparwidth{45pt}

\newtheorem{thm}{Theorem}[section]
\newtheorem{prop}[thm]{Proposition}

\newtheorem{lem}[thm]{Lemma}

\theoremstyle{remark}
\newtheorem{remark}[thm]{Remark}

\theoremstyle{definition}
\newtheorem{definition}[thm]{Definition}

\makeatletter
\renewcommand*\env@matrix[1][*\c@MaxMatrixCols c]{
  \hskip -\arraycolsep
  \let\@ifnextchar\new@ifnextchar
  \array{#1}}
\newcommand{\vertiii}[1]{{\left\vert\kern-0.25ex\left\vert\kern-0.25ex\left\vert #1 
    \right\vert\kern-0.25ex\right\vert\kern-0.25ex\right\vert}}
\newcommand*\isom{\xrightarrow{\sim}}
\newcommand{\pair}[1]{\langle#1\rangle}
\newcommand{\divisor}{\operatorname{div}}

\newcommand{\Pic}{\operatorname{Pic}}

\newcommand{\Sp}{\operatorname{Sp}}

\def\opn#1#2{\def#1{\operatorname{#2}}}
\opn\Vor{Vor}

\def\uu{\mathcal{U}}
\def\rr{\mathbb{R}}

\def\zz{\mathbb{Z}}
\def\cc{\mathbb{C}}

\def\HH{\mathcal{H}}
\def\nn{\mathcal{N}}
\def\mm{\mathcal{M}}
\def\ll{\mathcal{L}}
\def\bb{\mathcal{B}}
\def\pp{\mathcal{P}}
\def\xx{\mathcal{X}}
\def\CC{\mathcal{C}}
\def\jj{\mathcal{J}}

\def\aa{\mathcal{A}}
\def\oo{\mathcal{O}}

\def\ee{\mathcal{E}}
\def\ff{\mathfrak{F}}

\def\hh{\mathcal{H}}
\def\Im{\mathrm{Im}\,}
\def\d{\mathrm{d}}

\def\id{\mathrm{id}}

\def\vert{\mathrm{vert}}

\def\a{a}

\def\ct{\mathrm{ct}}
\def\deldelbar{\partial \overline{\partial}}
\def\Ar{\mathrm{Ar}}

\def\Hdg{\mathrm{Hdg}}

\numberwithin{equation}{section}

\newcounter{nootje}
\setcounter{nootje}{1}

\subjclass[2010]{
\href{https://mathscinet.ams.org/msc/msc2010.html?t=11G50}{11G50},
\href{https://mathscinet.ams.org/msc/msc2010.html?t=14G40}{14G40},
\href{https://mathscinet.ams.org/msc/msc2010.html?t=14H15}{14H15},
\href{https://mathscinet.ams.org/msc/msc2010.html?t=14H40}{14H40},
\href{https://mathscinet.ams.org/msc/msc2010.html?t=14H42}{14H42},
\href{https://mathscinet.ams.org/msc/msc2010.html?t=14H45}{14H45}.}

\date{\today}

\begin{document}

\title[Frobenius' theta function and Arakelov invariants]{Frobenius' theta function and Arakelov invariants in genus three}

\author{Robin de Jong}
\address{Leiden University \\ PO Box 9512  \\ 2300 RA Leiden  \\ The Netherlands }
\email{\href{mailto:rdejong@math.leidenuniv.nl}{rdejong@math.leidenuniv.nl}}

\begin{abstract} We give explicit formulas for the Kawazumi-Zhang invariant and Faltings delta-invariant of a compact and connected Riemann surface of genus three. The formulas are in terms of two integrals over the associated jacobian, one integral involving the standard Riemann theta function, and another involving a theta function particular to genus three that was discovered by Frobenius. We review part of Frobenius' work on his theta function and connect our results with a formula due to Bloch, Hain and Bost describing the archimedean height pairing of Ceresa cycles in genus three.
\end{abstract}

\maketitle

\setcounter{tocdepth}{1}

\thispagestyle{empty}

\section{Introduction}
\renewcommand*{\thethm}{\Alph{thm}}

\subsection{Motivation and background} 

Let $X$ be a compact and connected Riemann surface of positive genus~$g$. Associated to $X$ one has an invariant $\varphi(X)$ as defined by Kawazumi  \cites{kawhandbook, kaw} and  Zhang \cite{zhgs}, and an invariant $\delta(X)$ as defined by Faltings \cite{fa}. The invariants $\varphi(X)$ and $\delta(X)$ can be expressed in terms of the spectral theory of the Laplacian for the Arakelov metric on $X$, and play a key role in the arithmetic intersection theory of curves over number fields \cites{fa, zhgs}. 

More specifically, the Kawazumi-Zhang invariant occurs as an archimedean contribution in a formula that relates the self-intersection of the relative dualizing sheaf of a stable arithmetic surface with the height of Gross-Schoen cycles \cite[Theorem~1.3.1]{zhgs}, and the Faltings delta-invariant occurs as an archimedean contribution in a (Noether type) formula that relates the self-intersection of the relative dualizing sheaf with the stable Faltings height \cite[Theorem~6]{fa}. Both invariants can be seen as natural real-valued functions on the moduli space $\mm_g$ of compact Riemann surfaces of genus~$g$.

As such they have attracted attention in string theory. For example, recently it was shown that the integral of the Kawazumi-Zhang invariant $\varphi$ in genus two  against the volume form of the Siegel metric over the moduli space $\mm_2$ appears in the low energy expansion of the two-loop four-graviton amplitude \cite{dhg}. This connection has inspired a detailed study of its asymptotic behavior in the low energy (or tropical) limit \cites{dhg, dhgp, dhgrpr}. It was shown in \cite{dhgrpr} that the invariant $\varphi$ in genus two is an eigenfunction with eigenvalue $5$ of the Laplace-Beltrami operator on $\mm_2$. With this, the value of the two-loop $D^6\mathcal{R}^4$ interaction could be calculated rigorously, and was verified to be in agreement with the value of the interaction that was predicted by S-duality and supersymmetry.

Given their canonical nature, and their appearance in formulas for various intrinsic heights related to curves over number fields, it is natural to ask for formulas for $\varphi(X)$ and for $\delta(X)$ that are as concrete as possible. A natural viewpoint is to try to obtain explicit formulas in purely classical terms related to a period matrix of $X$, in particular, theta functions, and integrals over the jacobian of such. 

In genus one, such an explicit formula for $\delta(X)$ was exhibited already by Faltings \cite[Section~7]{fa}, featuring the classical discriminant modular form $\Delta(\tau)$ of weight~$12$ and level one (the invariant $\varphi(X)$ vanishes in this case). Explicit formulas of the type described were given in genus two by Bost \cite{bost_cras} for the delta-invariant, and by the author \cite{dj_adm} for the Kawazumi-Zhang invariant. In \cite{pioline} both invariants in genus two are written in terms of certain theta lifts of weak Jacobi forms in genus one. Wilms \cite{wilms}    has extended the formulae from \cites{bost_cras, fa, dj_adm} to hyperelliptic Riemann surfaces $X$ of arbitrary genus (cf.\ \cite[Theorem~4.8]{wilms}). 

\subsection{Aim of this paper} The aim of the present paper is to extend the above results to the case where the genus of $X$ is equal to three. Our main result is Theorem~\ref{main}. The formulas for $\varphi(X)$ and $\delta(X)$ given there feature integrals, over the jacobian, of (a) the classical Riemann theta function $\theta$, and (b) a beautiful second order theta function $\phi$ that was discovered and studied by Frobenius \cite{frob}. It would be interesting to see whether our explicit formulas may be brought to bear on the study of three-loop superstring amplitudes, or can be used to calculate the height of Gross-Schoen cycles explicitly in genus three.

Part of our paper consists in a review of some of Frobenius' work on his theta function $\phi$. Among other things we will see that, in modern terms, Frobenius' theta function determines a {\em Siegel-Jacobi form} of {\em weight eight}, {\em index one} and {\em full level} over the universal complex abelian threefold (cf.\ Section~\ref{mod_prop}). Further, Frobenius' theta function $\phi$ can be written down as an explicit linear combination of squares of  theta functions with characteristics (cf.\ Theorem~\ref{thm_explicit_frob}). Finally, the restriction of $\phi$ to the universal jacobian in genus three defines the {\em universal difference surface} over $\mm_3$ (cf.\ Theorem~\ref{level_one}).

We refer to \cites{cfg, gsm, vdgvg} for other modern accounts of Frobenius' theta function. In each of {\em loc.\ cit.} the discussion is based on a connection with the well-known {\em Schottky-Igusa modular form} in genus four, which Frobenius discusses at the very end of his article. Our approach here will be somewhat different and based on the earlier sections of \cite{frob}. 
 
\subsection{Main result}

Let $X$ be a compact Riemann surface of genus three, and let $J$ be the jacobian of $X$.
We refer to Sections~\ref{theta_with_char} and~\ref{def_frob_theta} for definitions of the theta functions $\theta$ and $\phi$ on (the universal covering of) $J$. The functions $\theta$ and $\phi$ give rise to canonical {\em normalized theta functions} $\|\theta\| \colon J \to \rr $ and $\|\phi\| \colon J \to \rr$, cf.\  Section~\ref{normalization}. Let $\mu$ be the Haar measure on $J$, normalized to give $J$ unit volume. We then have well-defined integrals
\begin{equation} \label{def_jacobian_invariants} \log \|H\|(X)= \int_{J} \log \|\theta\| \, \mu \, , \quad \log \|K\| (X)=  \int_{J} \log \|\phi\|\, \mu \, , 
\end{equation}
with values in $\rr$. Both $\log \|H\|$ and $\log\|K\|$ can be viewed as real-valued functions on the moduli space $\mm_3$.
\begin{thm} \label{main} 
Let $X$ be a compact and connected Riemann surface of genus three. Let $\varphi(X)$ be the Kawazumi-Zhang invariant of $X$, and let $\delta(X)$ be the Faltings delta-invariant of $X$. One has
\begin{equation} \label{explicit_phi}
\varphi (X)= -\frac{2}{3}\log \|K\|(X) + \frac{32}{3} \log \|H\| (X) + 8 \log 2 \, ,
\end{equation}
\begin{equation} \label{explicit_delta}
\delta(X) = -\frac{4}{3}\log \|K\| (X) - \frac{8}{3} \log\|H\|(X) - 24 \log(2\pi) + 16 \log 2 \, . 
\end{equation}
\end{thm}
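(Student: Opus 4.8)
The plan is to combine two kinds of input: (i) known comparison formulas expressing $\varphi(X)$ and $\delta(X)$ in terms of more accessible Arakelov-theoretic quantities, and (ii) an explicit analysis, via Frobenius' theta function, of those quantities in genus three. First I would recall the ``canonical'' invariants attached to $X$: the Arakelov metric on $\omega_X$, the Faltings metric / Hodge bundle norm on $\det \rmH^0(X,\omega_X)$, and the normalized theta function $\|\theta\|$ on $J$. There is a standard formula (due to Bost, Faltings, and refined by several authors) relating $\delta(X)$ to the ``norm of the theta function'', i.e. to $\int_J \log\|\theta\|\,\mu$, plus a term measuring the discrepancy between the Faltings metric and the metric coming from periods; schematically $\delta(X)$ is, up to universal constants, a combination of $\log\|H\|(X)$ and of $\log\|\Lambda\|(X)$ where $\Lambda$ is a suitable modular form built from $\theta$-constants (in genus two this is the role played by $\Delta_{10}$, in genus three by $\chi_{18}$ or the relevant power of the product of even theta-constants). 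Similarly, $\varphi(X)$ is related to $\delta(X)$ and to the self-intersection-type quantities through Zhang's identity $\varphi(X) = $ (linear combination of) $\delta(X)$ and the ``$\lambda$-invariant'' or the admissible pairing, which in turn has a theta-function expression. So the first real step is to assemble the precise genus-three specializations of these comparison formulas from the literature (Bost, de Jong, Wilms, Zhang, Kawazumi), each carrying explicit rational coefficients and $\log 2$, $\log(2\pi)$ contributions.

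The second, and genuinely genus-three-specific, step is to bring in $\phi$. The key structural fact, to be established earlier in the paper, is that the restriction of Frobenius' theta function $\phi$ to the universal jacobian cuts out the universal difference surface (Theorem~\ref{level_one}), and that $\phi$ is, up to an explicit linear combination, a sum of squares of theta functions with characteristics of weight eight and index one (Theorem~\ref{thm_explicit_frob}). I would use this to identify $\log\|K\|(X)=\int_J \log\|\phi\|\,\mu$ with a combination of (a) the integral $\int_J \log\|\theta\|\,\mu = \log\|H\|(X)$ and (b) the logarithm of the relevant Siegel modular form of weight eight, obtained by a Fourier–Jacobi / restriction-to-the-diagonal argument or by integrating the Siegel–Jacobi form over the abelian variety. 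Concretely: because $\phi$ is built from $\theta$-squares, $\log\|\phi\|$ differs from an average of $\log\|\theta\|$ over translates by $2$-torsion (hence, after integration over $J$, from $\log\|H\|(X)$) by the logarithm of a modular form; integrating kills the ``moving'' part and leaves a modular-form term plus a multiple of $\log\|H\|$. This is what converts the a priori two independent theta-integrals into the right-hand sides of \eqref{explicit_phi} and \eqref{explicit_delta}.

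The main obstacle I anticipate is bookkeeping the normalizations and constants precisely enough to land on the exact coefficients $-2/3$, $32/3$, $8\log 2$ and $-4/3$, $-8/3$, $-24\log(2\pi)$, $16\log 2$. Each of the ingredients — the Arakelov metric normalization, the factor of $2\pi$ in the period isomorphism, the weight-eight-versus-weight-nine ambiguity for the Frobenius form, the choice of Haar measure giving $J$ unit volume, and the precise constant in the $\delta$–$\theta$ comparison (Faltings' original genus-one case already produces $\log\|\Delta\|$ with a specific normalization, and the genus-three analogue must be pinned down the same way) — contributes a rational multiple of $\log 2$ or $\log(2\pi)$ or $\log\pi$, and these must all be reconciled. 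I would handle this by first proving the identities up to {\em a priori} unknown universal constants, using only the structural/modular facts, and then fixing the constants by evaluating both sides in one or two test cases where $\varphi$, $\delta$, $\|H\|$ and $\|K\|$ can be computed independently — for instance on a curve degenerating to a stable curve of compact type, where the asymptotics of all four quantities are known (the $\delta$ and $\varphi$ asymptotics near the boundary of $\mm_3$ are classical, and the theta-integral asymptotics follow from the corresponding degeneration of $\theta$ and $\phi$), or by comparison with the Bloch–Hain–Bost formula for the archimedean height of the Ceresa cycle, which the paper advertises as being connected to $\|K\|$ and which provides an independent normalization check.

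Finally, to close the argument I would verify consistency: the two formulas \eqref{explicit_phi} and \eqref{explicit_delta} together with Zhang's relation between $\varphi$ and $\delta$ (and the admissible pairing) impose one linear constraint among $\log\|H\|$, $\log\|K\|$ and the constants, and checking that our derived coefficients satisfy it is a strong internal sanity check that no factor of $2$ or $\pi$ has been dropped.
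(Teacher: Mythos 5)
Your overall architecture (establish the identity up to a universal constant, then pin the constant on a degenerate or special locus, using Wilms' relation \eqref{wilms} to reduce to a single identity) matches the paper's, but the one concrete mechanism you propose for the hard step is wrong. Theorem~\ref{thm_explicit_frob} writes $\theta_k(0,\tau)^2\,\phi(z,\tau)$ as a \emph{sum} $\sum_\lambda (kb\lambda,b\lambda)\,h_{b\lambda}(\tau)\,\theta_{kb\lambda}(z,\tau)^2$, and the logarithm of a sum does not split; so $\log\|\phi\|$ is not ``an average of $\log\|\theta\|$ over $2$-torsion translates plus the log of a modular form,'' and $\log\|K\|$ cannot be reduced to a multiple of $\log\|H\|$ plus $\log$ of a Siegel modular form. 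Indeed such a reduction is provably impossible: if $\log\|K\|=c\log\|H\|+\log\|\text{(modular form)}\|+\text{const}$ then comparing \eqref{variation_theta_integral} with \eqref{variation_phi_integral} would force $-\tfrac12=-\tfrac{c}{8}$ (coefficient of $e_1^A$) and $-\tfrac16=\tfrac{c}{12}$ (coefficient of $e_1^K$) simultaneously, i.e.\ $c=4$ and $c=-2$. The whole point of Theorem~\ref{main} is that $\log\|K\|$ is a genuinely new invariant: the result is a relation among the four quantities $\varphi,\delta,\log\|H\|,\log\|K\|$, not an elimination of $\log\|K\|$. So your second paragraph, which is supposed to ``convert the two independent theta-integrals into the right-hand sides,'' collapses.

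What is actually needed in place of that step is a curvature computation: by Theorem~\ref{level_one}, $\phi$ is a holomorphic section of $\ll^{\otimes 8}\otimes\bb$ on $\jj_3$ with divisor the universal difference surface $F$, so Poincar\'e--Lelong gives $\frac{\deldelbar}{\pi i}\log\|\phi\| = 2\omega_0+8\omega_{\Hdg}-\delta_F$; integrating against $\tfrac16\omega_0^3$ over the fibers and evaluating $\int_{p|_F}\omega_0^3$ via the difference map and identity \eqref{delta_upper_star} yields $\frac{\deldelbar}{\pi i}\log\|K\|=8\omega_{\Hdg}-\tfrac12 e_1^A-\tfrac16 e_1^K$. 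Combined with \eqref{variation_varphi} and \eqref{variation_theta_integral} this shows both sides of \eqref{explicit_phi} have equal $\deldelbar$, hence differ by a constant since pluriharmonic functions on $\mm_3$ are constant. For fixing the constant, your suggestion of degenerating to the boundary is workable in principle but delicate (both sides diverge and one must control the bounded terms); the cleaner route is the hyperelliptic locus, where everything is finite: there $\phi=\psi(\tau)\theta_k^2$ for the vanishing even characteristic $k$, so $\log\|K\|=\log\|\psi\|+2\log\|H\|$ — this is the only place your ``$\log\|K\|$ reduces to $\log\|H\|$ plus a modular form'' picture is valid — and the identity $\psi^{140}=\xi^7$ (Proposition~\ref{ups_and_xi}) ties $\psi$ to the form $\xi$ appearing in Wilms' hyperelliptic formula \eqref{first_step}, which determines the constant $8\log 2$.
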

We note that the invariant $\log\|H\|(X)$ is defined in any genus $g \geq 1$, whereas the invariant $\log\|K\|(X)$ is special to genus three. The invariant $\log \|H\|(X)$ was first introduced and studied by Bost \cite{bost_cras} and Bost, Mestre and Moret-Bailly \cite{bmmb} in genus two. Wilms \cite[Theorem~1.1]{wilms} has shown in any genus $g \geq 1$ the fundamental formula
\begin{equation} \label{wilms}
\delta(X) = -24 \log \|H\| (X)  + 2\, \varphi(X) -8g \log(2\pi) \, . 
\end{equation}
Wilms' result (\ref{wilms}) immediately shows that the two identities (\ref{explicit_phi}) and (\ref{explicit_delta}) in Theorem~\ref{main} are equivalent.

\subsection{Hain-Reed beta-invariant} 

Let $g \geq 3$ be an integer. The {\em Hain-Reed invariant} $\beta$ is a designated element of the space $C^0(\mm_g,\rr)/\rr$ of continuous real-valued functions modulo constants on $\mm_g$, defined and studied by Hain and Reed in \cite{hrar}. We refer to Section~\ref{hainreed_bis} for a brief discussion. In {\em loc.\ cit.}, an explicit formula for $\beta$ was announced in the case $g=3$ in terms of the Siegel modular form $\chi_{18}$ and a certain integral. However, as far as we can see, this formula has not been published since. 

As was shown by the author  \cite{djsecond}, the invariant $\beta$ admits a simple expression in terms of $\varphi$ and $\delta$. We conclude using Theorem~\ref{main} that we may write the invariant $\beta$ in genus three explicitly in terms of the two integrals $\log \|K\|$ and $\log \|H\|$.  More precisely, we have the following. Let $X$ be a compact and connected Riemann surface of genus $g$. We then set, following \cite[Section~1.4]{zhgs} and \cite[Definition~1.2]{djsecond}, 
\begin{equation} \label{def_lambda} 
 \lambda(X) = \frac{g-1}{6(2g+1)} \varphi(X) + \frac{1}{12}\delta(X) - \frac{g}{3} \log(2\pi) \, .
\end{equation}
Here $\varphi(X)$ and $\delta(X)$ are the Kawazumi-Zhang and Faltings delta-invariant of $X$ as before. By \cite[Theorem~1.4]{djsecond} the function $(8g+4)\lambda$ is a representative of the Hain-Reed invariant $\beta$ on $\mm_g$. Specializing to the case $g=3$ one finds that $\frac{4}{3}\varphi(X) + \frac{7}{3}\delta(X)$
is a representative of the Hain-Reed invariant $\beta$ on $\mm_3$. From the identities (\ref{explicit_phi}) and (\ref{explicit_delta}) we may then deduce the following.
\begin{thm} \label{explicit_beta} 
Let $\mm_3$ denote the moduli space of compact Riemann surfaces of genus three.
The function $-4\log\|K\|+8\log\|H\|$ is a representative in $C^0(\mm_3,\rr)$ of the Hain-Reed invariant $\beta$ on $\mm_3$.
\end{thm}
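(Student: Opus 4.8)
The plan is to deduce Theorem~\ref{explicit_beta} directly from Theorem~\ref{main} together with the facts recalled just before the statement. As noted there, specializing \cite[Theorem~1.4]{djsecond} and the definition~\eqref{def_lambda} of $\lambda$ to the case $g=3$ gives that the function $\frac{4}{3}\varphi + \frac{7}{3}\delta$ (equal to $28\lambda$ up to the additive constant $28\log(2\pi)$) is a representative of the Hain-Reed invariant $\beta$ in $C^0(\mm_3,\rr)/\rr$. Since elements of $C^0(\mm_3,\rr)/\rr$ are only determined up to an additive constant, it therefore suffices to prove two things: that $\frac{4}{3}\varphi + \frac{7}{3}\delta$ and $-4\log\|K\|+8\log\|H\|$ differ by a constant on $\mm_3$, and that $-4\log\|K\|+8\log\|H\|$ is continuous on $\mm_3$.

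For the first point I would substitute the formulas~\eqref{explicit_phi} and~\eqref{explicit_delta} for $\varphi(X)$ and $\delta(X)$ into $\frac{4}{3}\varphi(X)+\frac{7}{3}\delta(X)$ and collect terms. The coefficient of $\log\|K\|(X)$ is $-\tfrac{4}{3}\cdot\tfrac{2}{3}-\tfrac{7}{3}\cdot\tfrac{4}{3}=-\tfrac{8}{9}-\tfrac{28}{9}=-4$, the coefficient of $\log\|H\|(X)$ is $\tfrac{4}{3}\cdot\tfrac{32}{3}-\tfrac{7}{3}\cdot\tfrac{8}{3}=\tfrac{128}{9}-\tfrac{56}{9}=8$, and the remaining terms assemble into the constant $48\log 2-56\log(2\pi)$. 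Thus $\frac{4}{3}\varphi(X)+\frac{7}{3}\delta(X)=-4\log\|K\|(X)+8\log\|H\|(X)+48\log 2-56\log(2\pi)$ identically on $\mm_3$, and this constant is annihilated on passing to $C^0(\mm_3,\rr)/\rr$.

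For the continuity claim I would argue that the linear system~\eqref{explicit_phi}--\eqref{explicit_delta}, read as expressing $(\varphi,\delta)$ in terms of $(\log\|K\|,\log\|H\|)$, has invertible coefficient matrix: its determinant equals $\tfrac{16}{9}+\tfrac{128}{9}=16\neq 0$. Inverting, $\log\|K\|$ and $\log\|H\|$ are, up to additive constants, $\rr$-linear combinations of $\varphi$ and $\delta$. Since $\varphi$ and $\delta$ are well known to be continuous functions on $\mm_3$, so are $\log\|K\|$ and $\log\|H\|$, and hence so is $-4\log\|K\|+8\log\|H\|$; this gives the required membership in $C^0(\mm_3,\rr)$ and, together with the previous paragraph, completes the proof.

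I do not expect a genuine obstacle here: Theorem~\ref{explicit_beta} is a formal consequence of Theorem~\ref{main} and \cite[Theorem~1.4]{djsecond}, and the only points requiring any care are the bookkeeping of the rational coefficients in the substitution and the (routine) verification of continuity. All of the substantive work is already carried out in the proof of Theorem~\ref{main}.
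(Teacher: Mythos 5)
Your proposal is correct and follows essentially the same route as the paper: specialize $(8g+4)\lambda$ to $g=3$ to get $\tfrac{4}{3}\varphi+\tfrac{7}{3}\delta$ as a representative of $\beta$ modulo constants, then substitute the two identities of Theorem~\ref{main} and collect coefficients, which indeed yields $-4\log\|K\|+8\log\|H\|$ plus the constant $48\log 2-56\log(2\pi)$. Your arithmetic checks out, and the continuity remark (while the paper leaves it implicit) is a harmless addition.
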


\subsection{Structure of the paper} 
In Section~\ref{Riemann} we review some basic definitions concerning characteristics and theta functions.
In Section~\ref{modularity} we discuss moduli of abelian varieties and in particular review the notions of Siegel modular forms and Siegel-Jacobi forms.
In Section~\ref{Frobenius} we introduce Frobenius' theta function and discuss some of its properties, following Frobenius' paper \cite{frob}.
In Section~\ref{sec:ar_geom_A_g} we introduce several canonical hermitian metrics and differential forms on moduli of abelian varieties, as well as canonically   normalized real-analytic versions of Siegel modular and Siegel-Jacobi forms.
In Section~\ref{arakelov} we review some identities between canonical differential forms on various moduli spaces of curves.  
In Section~\ref{sec:normalized} we specialize to genus three and prove Theorem~\ref{main}.
Finally in Section~\ref{bloch} we connect our findings with a formula due to Bloch, Hain and Bost describing the archimedean height pairing of Ceresa cycles in genus three.

\subsection{Acknowledgments} I would like to thank Boris Pioline for helpful correspondence and Emre Sert\"oz for showing me a nice proof of Lemma~\ref{difference}.

\renewcommand*{\thethm}{\arabic{section}.\arabic{thm}}

\section{Theta functions} \label{Riemann}

The purpose of this preliminary section is to review some fundamentals concerning characteristics and theta functions. 
A basic reference for this section is \cite[Chapter~II]{tata1}.
 We view all vectors as column vectors. Let $g \geq 1$ be an integer.

\subsection{Characteristics} \label{chars}   An element $(a',a'') \in \frac{1}{2}\zz^g \times \frac{1}{2} \zz^g$ is called a {\em characteristic} in degree~$g$.  We call two characteristics {\em distinct} when they are distinct modulo $\zz^{g} \times \zz^g$, and sometimes we refer to a characteristic as being the residue class of a characteristic modulo $\zz^{g} \times \zz^g$. For characteristics $a=(a',a'')$ and $b=(b',b'')$ we define the signs
\[ (a) = \exp(4\pi i {}^ta' a'') \, , \quad  (a,b)=\exp(4\pi i {}^ta'' b')  \, , \]
and
\[  ((a,b))=(a,b)(b,a)=\exp( 4\pi i ({}^ta' b'' - {}^tb' a'')) \, . \]
These signs only depend on the classes of $a, b$ modulo $\zz^{g} \times \zz^g$. 

The natural group operation on characteristics is usually written in a multiplicative manner to safe space.
Following this convention we have for example
\[ ((a,b)) = (a)(b)(ab) \, , \]
as is easily verified.
We put
\[ (((a,b,c)))=(a)(b)(c)(abc) =(bc)(ca)(ab)=((b,c))((c,a))((a,b)) \, . \]
A set of three distinct characteristics $a, b, c$ is called {\em syzygetic} if $(((a,b,c)))=+1$, and {\em azygetic} if $(((a,b,c)))=-1$. 
\begin{lem} \label{difference} Let $g \geq 1$ be an integer. Each non-zero characteristic in degree~$g$ can be written in exactly $2^{2g-2}$ ways as the difference of an odd and an even characteristic.
\end{lem}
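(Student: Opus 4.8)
The plan is to view the group of characteristics modulo $\zz^g\times\zz^g$ as a $2g$-dimensional $\mathbb{F}_2$-vector space on which the parity sign $(\,\cdot\,)$ behaves as a quadratic form and the sign $((\,\cdot\,,\,\cdot\,))$ as its associated bilinear form. Identifying a characteristic $a=(a',a'')$ with the vector $(2a',2a'')$, the relation $(ab)=(a)(b)((a,b))$ recorded in Section~\ref{chars} says precisely that $a\mapsto(a)$ is quadratic with polarisation $((\,\cdot\,,\,\cdot\,))$; and $((\,\cdot\,,\,\cdot\,))$ is nondegenerate, as is immediate from its definition. Consequently, for any non-zero characteristic $c$ the map $b\mapsto((c,b))$ is a surjective homomorphism from characteristics onto $\{\pm1\}$, so for each fixed sign $s$ the set $\{\,b:((c,b))=s\,\}$ has exactly $2^{2g-1}$ elements.

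Next I would translate the counting problem into this language. Since the group of characteristics is $2$-torsion, writing a non-zero characteristic $c$ as a difference $c=ab^{-1}$ with $a$ odd and $b$ even means $a=cb$ with $(a)=-1$ and $(b)=+1$; such a representation is uniquely determined by its even member $b$, so counting representations is the same as counting admissible $b$. Using $(cb)=(c)(b)((c,b))$, the two conditions $(b)=+1$ and $(cb)=-1$ together are equivalent to $(b)=+1$ and $((c,b))=-(c)$. Hence the quantity to be computed equals $\#\{\,b:(b)=+1\text{ and }((c,b))=-(c)\,\}$, and by the first paragraph the set $T_c:=\{\,b:((c,b))=-(c)\,\}$ has $2^{2g-1}$ elements.

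It then remains to show that exactly half the elements of $T_c$ are even, and for this I would use the involution $\iota\colon b\mapsto cb$. Bilinearity of $((\,\cdot\,,\,\cdot\,))$ together with $((c,c))=+1$ gives $((c,cb))=((c,b))$, so $\iota$ maps $T_c$ to itself; as $c\neq 0$ it has no fixed point, so $T_c$ decomposes into $2^{2g-2}$ orbits of size two. For $b\in T_c$ one computes $(cb)=(c)(b)((c,b))=(c)(b)\bigl(-(c)\bigr)=-(b)$, so the two characteristics in a given orbit $\{b,cb\}$ have opposite parity, and each orbit contains exactly one even characteristic. Therefore $\#\{\,b\in T_c:(b)=+1\,\}=2^{2g-2}$, which is the assertion.

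All the identities used are immediate from the definitions in Section~\ref{chars}, so I expect no serious obstacle; the one place that wants care is the reduction step — reading ``difference'' as the group law, verifying that a representation is genuinely pinned down by its even member, and noting that the parity of $c$ itself never enters the final count, since $T_c$ has cardinality $2^{2g-1}$ regardless of the value of the sign $-(c)$.
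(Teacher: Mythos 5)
Your proof is correct, and it is a genuinely different argument from the one in the paper. You carry out the ``affine geometry over $\mathbb{F}_2$'' proof that the paper only alludes to: you treat the parity sign $(\,\cdot\,)$ as a quadratic form on $\mathbb{F}_2^{2g}$ polarized by the nondegenerate symplectic pairing $((\,\cdot\,,\,\cdot\,))$, reduce the count to the even elements of the coset $T_c=\{b:((c,b))=-(c)\}$ of size $2^{2g-1}$, and then split $T_c$ into $2^{2g-2}$ parity-swapping orbits under $b\mapsto cb$. Every step checks out: the identity $(cb)=(c)(b)((c,b))$ is exactly the relation $((a,b))=(a)(b)(ab)$ from Section~\ref{chars}, $((c,c))=+1$ holds by antisymmetry of the exponent, and your observation that the final count is independent of the parity of $c$ comes for free. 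The paper instead argues geometrically on moduli: it shows via a monodromy/transitivity argument that the difference map from the moduli space of ordered (odd, even) spin-bundle pairs to the punctured two-torsion substack of the universal jacobian is finite \'etale of constant degree, and then divides the degree $2^{g-1}(2^g-1)\cdot 2^{g-1}(2^g+1)$ by $2^{2g}-1$. Your route is more elementary and self-contained --- it does not even require knowing the counts $2^{g-1}(2^g\mp 1)$ of odd and even characteristics, which the paper's proof takes as input --- whereas the paper's proof buys a conceptual interpretation in terms of spin structures and the connectedness of $\jj_g[2]'$ that fits the geometric theme of the article.
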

This can be proven using affine geometry over $\mathbb{F}_2$. In Section~\ref{proof_difference} we give a proof based on the geometry of the moduli space of curves.

\subsection{Theta functions} \label{theta_fcns}
Let $\HH_g$ denote Siegel's upper half space of degree~$g$, that is the set of all complex symmetric $g$-by-$g$ matrices whose imaginary part is positive definite. Let $\tau \in \HH_g$. Then the abelian group $L_\tau =  \tau \zz^g + \zz^g $ is a lattice in $\cc^g$, and the complex torus $A_\tau = \cc^g/L_\tau$ has a natural structure of principally polarized complex abelian variety of dimension~$g$. 
\begin{definition} \label{thetas} (Theta functions) 
For $m=(m',m'') \in \zz^{g} \times \zz^g$ we put
\[  e_m(z,\tau) = \exp(-\pi i \,{}^t m'  \tau m' - 2\pi i \,{}^t m' z) \, , \quad z \in \cc^g \, . \]
Let $\ell \in \zz_{\geq 0}$.
 A holomorphic map $f \colon \cc^g \to \cc$ satisfying
\begin{equation} \label{func_eqn} f(z+ \tau m'+m'') =  e_m(z,\tau)^\ell \cdot f(z) \, , \quad m=(m', m'') \in \zz^{g}\times \zz^g \, , \, z \in \cc^g 
\end{equation}
is called a  {\em theta function of order} $\ell$ with respect to $L_\tau$.  
\end{definition}
The set $V_{\ell, \tau}$ of theta functions of order $\ell$ with respect to $L_\tau$ is a  complex vector space  of dimension $\ell^g$, consisting entirely of even functions (cf.\ \cite[Proposition~II.1.3]{tata1}).  When $f \in V_{\ell,\tau}$ is a non-zero theta function, we see from (\ref{func_eqn}) that the divisor of zeroes of $f$ on $\cc^g$ is $L_\tau$-periodic, and hence descends to give a well-defined effective divisor on the abelian variety $A_\tau = \cc^g/L_\tau$. We call $\tau \in \HH_g$ {\em indecomposable} if $A_\tau$ can not be written as the product of principally polarized abelian varieties of smaller dimension, and {\em decomposable} if $\tau$ is not indecomposable.

\subsection{Theta functions with characteristic} \label{theta_with_char}
\begin{definition} (Theta function with characteristic) For $ \tau \in \HH_g$ and $\a=(\a',\a'') \in \frac{1}{2}\zz^{g} \times \frac{1}{2}\zz^{g}$ we have the {\em theta function with characteristic} $\a$  given by the infinite series 
\begin{equation} \label{def_theta} \theta_\a(z, \tau) = \sum_{n \in \zz^g} \exp( \pi i  {}^t (n+\a')  \tau  (n+\a') + 2\pi i  {}^t (n+\a') (z+\a'')) \, , \quad z \in \cc^g \, . 
\end{equation}
\end{definition}
Each $\theta_\a$ defines a holomorphic function on $\cc^g$.
We just write $\theta(z,\tau)$ in the case $\a=0$.  
The function $\theta_\a=\theta_\a(z, \tau)$ is either odd or even as a function of~$z$, depending on whether $(\a)=-1$ or $(\a)=+1$.  
From the definition (\ref{def_theta}) one readily shows that for $m=(m', m'') \in \zz^{g} \times \zz^g$ and $\a=(\a',\a'') \in \frac{1}{2}\zz^{g} \times \frac{1}{2}\zz^{g}$ we have a functional equation
\begin{equation} \label{functional}
 \theta_\a(z+ \tau m' + m'',\tau) = \sqrt{((\a,m))} \cdot e_m(z,\tau) \cdot \theta_\a(z,\tau) \, , \quad z \in \cc^g \, ,
 \end{equation}
where $\sqrt{((\a,m))} $ is the sign $\exp(2\pi i ({}^t \a' m'' - {}^t m' \a'')) $. We see that the divisor of zeroes of the holomorphic function $\theta_\a$ is $L_\tau$-periodic, and hence descends to give a well-defined effective divisor on the abelian variety $A_\tau = \cc^g/L_\tau$. 
We put
\begin{equation} \label{exp_factor} \eta_\a(z,\tau) = \exp(- \pi i {}^t \a' \tau \a' - 2\pi i {}^t \a' (z+ \a'')) \, . 
\end{equation}
Let $\theta(z,\tau)$ be the theta function with zero characteristic. A small calculation yields that
\begin{equation} \label{theta_translate}
\theta(z+\tau \a' + \a'',\tau) = \theta_\a(z,\tau)   \cdot \eta_\a(z,\tau) \, .
\end{equation}
Equation (\ref{theta_translate}) shows that the divisors of the various $\theta_\a$ on $A_\tau$ are translates of one another by two-torsion points. We call a characteristic $a$ \emph{vanishing} (with respect to $\tau$) if $\theta_a(0,\tau)=0$.

The functional equation (\ref{functional}) shows that, somewhat confusingly, $\theta_\a$ is not a theta function in the sense of Definition \ref{thetas}, unless $\a=0$. The function $\theta$ is not identically zero and we see that $V_{1,\tau} = \cc \cdot \theta$. Each square of a $\theta_\a$ is a theta function of order two, however. Moreover we have that $\theta_\a^2= \theta_{\a+m}^2$ for $m \in \zz^{g} \times \zz^g$, in other words the theta functions $\theta_\a^2$ are independent of the choice of representative of $\a$ modulo $\zz^{g} \times \zz^g$. It can be shown that the squares $\theta_\a^2$, where $\a$ runs over all characteristics, span the space $V_{2,\tau}$ of theta functions of order two (cf.\ \cite[p.~618]{vdgvg}).  Frobenius' theta function, to be introduced in Section~\ref{Frobenius}, will be a designated element of the space $V_{2,\tau}$, for $\tau$ an indecomposable element of $\HH_3$.

Let $\varTheta_\tau$ denote the divisor of the theta function $\theta(z,\tau)$ with zero characteristic  on $A_\tau$, and let $\ell \in \zz_{\geq 0}$. Then pullback along the canonical projection $\cc^g \to A_\tau$ yields an isomorphism $\varGamma(A_\tau,\oo_{A_\tau}(\ell\,\varTheta_\tau)) \isom V_{\ell, \tau}$ of $\cc$-vector spaces. We have that $A_\tau$ is indecomposable if and only if the divisor $\varTheta_\tau$ of $A_\tau$ is irreducible.

\section{Moduli of abelian varieties} \label{modularity}
Let $\widetilde{q} \colon \uu_g \to \hh_g$ denote the {\em universal abelian variety} over $\hh_g$, by which we mean the family of complex tori over $\HH_g$ whose fiber at $\tau \in \hh_g$ is given by the abelian variety $A_\tau = \cc^g/L_\tau$. We note that $\uu_g$ can be realized as the quotient of the product space $\cc^g \times \hh_g$ by the action of $\zz^g \times \zz^g$ given by $(m',m'') \cdot (z,\tau) = (z+\tau m'+m'',\tau)$. Let $\Omega_{\uu_g/\hh_g}$ denote the sheaf of relative $1$-forms of $\widetilde{q}$. The {\em Hodge bundle} on $\HH_g$ is the pushforward sheaf $\widetilde{\ee}=\widetilde{q}_* \Omega_{\uu_g/\hh_g}$. This is a vector bundle of rank~$g$ on $\HH_g$. We denote its determinant by $\widetilde{\ll}$. The vector bundle $\widetilde{\ee}$ is globally trivialized by the frame $ ( \d z_1,\ldots,  \d z_g)$, and the determinant line bundle $\widetilde{\ll} = \bigwedge^g \widetilde{\ee}$ is globally trivialized by the frame $\d z_1 \wedge \ldots \wedge \d z_g$. 

We write elements of the symplectic group $\Sp(2g,\zz)$ as $\left(\begin{smallmatrix} A&B\\ C&D \end{smallmatrix}\right)$
where $A, B, C, D$ are $g \times g$ matrices. We have a left action of $\Sp(2g,\zz)$ on $\HH_g$ given by 
putting
\[  \left(\begin{smallmatrix} A&B\\ C&D \end{smallmatrix}\right) \cdot \tau =    (A\tau+B)(C\tau +D)^{-1} \, . \]
As is well-known the quotient $\aa_g = \Sp(2g,\zz) \setminus \hh_g$ can be identified with the moduli space of principally polarized abelian varieties.
We write $q \colon \xx_g \to \aa_g$ for the universal abelian variety over $\aa_g$. In this paper, all moduli spaces are viewed as orbifolds, or more accurately, as stacks.

More generally, let $\varGamma \subseteq \Sp(2g,\zz)$ be a finite index subgroup. Then $\aa_g^\varGamma = \varGamma \setminus \hh_g$ can be viewed as the moduli space of principally polarized complex abelian varieties of dimension~$g$ and level $\varGamma$. Let $q^\varGamma \colon \xx_g^\varGamma \to \aa_g^\varGamma$ denote the universal abelian variety over $\aa_g^\varGamma$, and let $\Omega_{\xx_g^\varGamma/\aa_g^\varGamma}$ denote the sheaf of relative $1$-forms of $q^\varGamma$. We have a Hodge bundle $\ee^\varGamma=q^\varGamma_* \Omega_{\xx^\varGamma_g/\aa^\varGamma_g}$ and associated determinant line bundle $\ll^\varGamma = \bigwedge^g \ee^\varGamma$ on $\aa_g^\varGamma$. Both $\ee^\varGamma$ and $\ll^\varGamma$ are obtained as quotients of their counterparts $\widetilde{\ee}$ and $\widetilde{\ll}$ on $\hh_g$ by a natural action of the group $\varGamma$. We just write $\ee$ and $\ll$ in the case of the full level subgroup $\varGamma=\Sp(2g,\zz)$. As any $\ee^\varGamma$ or $\ll^\varGamma$ can be obtained by pullback from $\aa_g$ we often leave out the superscript $\varGamma$ and just write $\ee$ or $\ll$ for the Hodge bundle and its determinant on $\aa_g^\varGamma$.

\subsection{Siegel modular and Siegel-Jacobi forms} \label{siegel-jacobi} 
Assume from now on that $g \geq 2$.
Let $k \in \zz$ and let $\varGamma \subseteq \Sp(2g,\zz)$ be a finite index subgroup. A {\em Siegel modular form of weight $k$ and level $\varGamma$} is a holomorphic section of the line bundle $\ll^{\otimes k}$ on $\aa_g^\varGamma$, or, equivalently, a
holomorphic function $f \colon \hh_g \to \cc$ such that for all $\left(\begin{smallmatrix} A&B\\ C&D \end{smallmatrix}\right) \in \varGamma$ the identity
\[ f( \left(\begin{smallmatrix} A&B\\ C&D \end{smallmatrix}\right) \cdot \tau) = \det(C\tau +D)^k \cdot f(\tau) \, , \quad \tau \in \hh_g  \]
is satisfied.
As above let $q^\varGamma \colon \xx_g^\varGamma \to \aa_g^\varGamma$ denote the universal abelian variety over $\aa_g^\varGamma$. Let $\xx_g^{\varGamma,\lor}$ denote the dual of the universal abelian variety over $\aa_g^\varGamma$, and let $\lambda^\varGamma \colon \xx_g^\varGamma \isom \xx_g^{\varGamma,\lor}$ be the universal principal polarization. Let $\pp^\varGamma$ denote the canonically rigidified Poincar\'e line bundle on $\xx_g^\varGamma \times_{\aa_g^\varGamma} \xx_g^{\varGamma,\lor}$, and write $\bb^\varGamma$ for the rigidified line bundle $(\id,\lambda^\varGamma)^*\pp^\varGamma$ on $\xx_g^\varGamma$. 
For each $\varGamma$ we can obtain $\bb^\varGamma$ by pullback from $\xx_g$ and as a result we will usually leave out the superscript $\varGamma$.  The restriction of $\bb$ to a given fiber of the structure map $q^\varGamma$ represents twice the cohomology class of the given principal polarization.

 Let $k, m \in \zz$. A {\em Siegel-Jacobi form of weight $k$, index $m$ and level $\varGamma$} is a holomorphic section of the line bundle $\ll^{\otimes k} \otimes \bb^{\otimes m}$ over $\xx_g^\varGamma$. When we only ask for a meromorphic section, then we rather speak about a {\em meromorphic Siegel-Jacobi form}.  We extend the left action of $\Sp(2g,\zz)$ on $\HH_g$ to a left action of $(\zz^g \times \zz^g ) \rtimes \Sp(2g,\zz)$ on $\cc^g \times \HH_g$ by putting
\[ \left(\begin{smallmatrix} A&B\\ C&D \end{smallmatrix}\right) \cdot (z,\tau) = ({}^t (C\tau+D)^{-1} z,     (A\tau+B)(C\tau +D)^{-1}) \, , \quad  \left(\begin{smallmatrix} A&B\\ C&D \end{smallmatrix}\right) \in \Sp(2g,\zz) \, , \]
\[ (n', n'') \cdot (z,\tau) = (z+\tau n' + n'',\tau) \, , \quad (n',n'') \in \zz^g \times \zz^g \, . \]
Using this action Siegel-Jacobi forms of weight $k$, index $m$ and level $\varGamma$ can be identified with functions $f(z,\tau)$ holomorphic on $\cc^g \times \HH_g$ such that
\[ f( \left(\begin{smallmatrix} A&B\\ C&D \end{smallmatrix}\right) \cdot (z,\tau)) = \det(C\tau +D)^k \cdot \exp(2\pi i m \,{}^t z \cdot (C\tau+D)^{-1} \cdot C \cdot z )\, f(z,\tau) \, , \]
\[ f( (n',n'') \cdot (z,\tau) ) =  e_n(z,\tau)^{2m} \cdot f(z,\tau) \, , \quad (n', n'') \in \zz^{g}\times \zz^g \, . \]
We see that when $f(z,\tau)$ is a Siegel-Jacobi form of index $m$ the restriction of $f(z,\tau)$ to $\cc^g \times \{\tau\}$ is a theta function of order $2m$ with respect to $L_\tau$. 

By a slight abuse of language, we call a holomorphic function $f(z,\tau)$ a Siegel-Jacobi form of weight $k$ and index $m$ if some power $f^{\otimes n}$ is of weight $kn$ and index $mn$. With this convention, we have for every characteristic $\a$ in degree~$g$ that the square $\theta_\a^2(z,\tau)$ of the theta function with characteristic~$a$ is a Siegel-Jacobi form of weight one and index one, cf.\ \cite[Section~II.5]{tata1}. 

\subsection{Moduli of curves} \label{moduli_curves} Let $g \geq 2$ be an integer. We denote by $\mm_g$ the moduli space of compact Riemann surfaces of genus~$g$, and by $p \colon \jj_g \to \mm_g$ the universal jacobian over $\mm_g$. Recall that we view moduli spaces as stacks. We have a Torelli map $t \colon \mm_g \to \aa_g$ that associates to every compact Riemann surface of genus~$g$ its jacobian. The universal jacobian $p \colon \jj_g \to \mm_g$ is obtained by pulling back the universal abelian variety $q \colon \xx_g \to \aa_g$ along the Torelli map. Let $\ll$ denote the determinant of the Hodge bundle on $\aa_g$, and $\bb$ the line bundle $(\id,\lambda)^*\pp$ on $\xx_g$ as in Section~\ref{siegel-jacobi}. By a slight abuse of notation we shall also denote by $\ll$ resp.\ $\bb$ the pullback of these line bundles to $\mm_g$ resp. $\jj_g$ along the Torelli map. 

\subsection{Differences of characteristics} \label{proof_difference}
Using the moduli space of curves one can give a nice proof of Lemma~\ref{difference}. The proof we present here is due to Emre Sert\"oz.
First of all we note that the statement of the lemma is equivalent with the following statement: let $X$ be a compact connected Riemann surface of genus~$g$. Then each non-zero two-torsion point of the jacobian of $X$  can be written in exactly $2^{2g-2}$ ways as the difference of an odd and an even spin bundle. Here by a {\em spin bundle} we mean a line bundle whose square is a canonical line bundle. By the {\em sign} of a spin bundle $\eta$ on a compact Riemann surface $X$ we mean the parity of the dimension $h^0(X,\eta)$ of the vector space of global sections of $\eta$. 
 
 Now the equivalent statement can be shown as follows. We may assume that $g \geq 2$. As above let $\mm_g$ denote the moduli space of compact Riemann surfaces of genus~$g$. 
Let $\mathcal{S}_g^{\pm}$ denote the moduli space of pairs of spin bundles $(X,\eta_1,\eta_2)$ with $\eta_1$ odd and $\eta_2$ even, and let $\jj_g[2]'$ denote the two-torsion substack of the universal jacobian over $\mm_g$, with the zero section excluded. The forgetful maps of both $\mathcal{S}_g^{\pm}$ and $\jj_g[2]'$ onto $\mm_g$ are finite \'etale maps. As the standard action of $\Sp(2g,\zz)$ on $\mathbb{F}_2^{2g} \setminus \{0\}$ is transitive, it follows from considerations as in \cite[Section~5.14]{dm} that the monodromy action on a fiber of $\jj_g[2]'$ over $\mm_g$  is transitive and hence that $\jj_g[2]'$ is connected. This allows us to deduce that the difference map $\mathcal{S}_g^{\pm} \to \jj_g[2]'$ is finite \'etale, in particular surjective with constant degree.  
There are $2^{g-1}(2^g-1) $ odd characteristics in degree~$g$, and $2^{g-1}(2^g+1)$ even ones. Hence
the degree of $\mathcal{S}_g^{\pm}$ over $\mm_g$ is $2^{g-1}(2^g-1) \times 2^{g-1}(2^g+1)$. As the degree of $\jj_g[2]'$ over $\mm_g$ is $2^{2g}-1$ it follows that the degree of the difference map $\mathcal{S}_g^{\pm} \to \jj_g[2]'$ is $2^{2g-2}$. This finishes the proof of Lemma~\ref{difference}.

\section{Frobenius' theta function} \label{Frobenius}

The purpose of this section is to define Frobenius' theta function, and to review some of its properties, following \cite{frob}. We start with a discussion of fundamental systems in degree three. 
Using the notion of a fundamental system we will then introduce the so-called ``reduced values'' in degree three. We characterize Frobenius' theta function in terms of these reduced values. We then continue to discuss the connection with the difference surface, explicit formulas, modular properties, and finish with a discussion of the hyperelliptic case and the decomposable case. 

\subsection{Fundamental systems} \label{fundamental}

 Let $\ff$ be a set consisting of eight distinct theta characteristics in degree three. We call $\ff$ a {\em fundamental system} of characteristics if each subset of $\ff$ consisting of three characteristics is azygetic (cf.\ Section~\ref{chars}).  
 
The following properties of fundamental systems are discussed in \cite[Section~2, pp.~44--46]{frob}. The translate of a fundamental system by a  theta characteristic is again a fundamental system. The set of translates of a given fundamental system consists of 64 distinct systems, and is called a {\em pencil} of fundamental systems. A fundamental system contains either three or seven odd characteristics. In a pencil of fundamental systems there are eight fundamental systems that contain seven odd characteristics, and 56 systems that contain three odd characteristics. 

The sum of all eight characteristics in a fundamental system is denoted by $2j$. We have that $2j$ is an element of $\zz^3 \times \zz^3$ and hence that $j$ is a characteristic.  The sum  of all odd characteristics in a fundamental system is denoted by $k$. We have that $k$ is an even characteristic, and that $k$ is an invariant modulo $\zz^3 \times \zz^3$ through a pencil. Each even characteristic is obtained exactly once from a pencil in this manner, and hence there exist precisely $36$ pencils of fundamental systems. 

Let $\ff$ be a fundamental system, and write as above $k$ for the sum of all odd characteristics of $\ff$.  When $\{\alpha,\beta\}$ runs over all 28 two-element subsets of $\ff$, the characteristics $k\alpha\beta$ yield all odd theta characteristics, each of them once. When $\{\alpha, \beta,\gamma,\delta\}$ runs over all 70 four-element subsets of $\ff$, the characteristics $k \alpha\beta\gamma\delta$ yield all even characteristics different from $k$, each of them twice. In fact if $\ff=\{\alpha,\beta,\gamma,\delta,\kappa,\lambda,\mu,\nu\}$ then $k\alpha\beta\gamma\delta \equiv k \kappa\lambda\mu\nu$ modulo $\zz^3 \times \zz^3$.

Let $\ff=\{\alpha,\beta,\gamma,\delta,\kappa,\lambda,\mu,\nu\}$ be a fundamental system, and set $\a=\alpha\beta\gamma\delta$.  Then the eight-element multi-set $\{\alpha \a,\beta \a, \gamma \a, \delta \a, \kappa,\lambda,\mu,\nu \}$ is again a fundamental system, and the sum of the odd characteristics in it is equal to $k\a$. In this way each given fundamental system $\ff$ can be canonically extended to a complete system of representatives from all $36$ pencils of fundamental systems. 

\subsection{Reduced values}
Let $\a=(a',a'')$ be a non-zero characteristic in degree three, and let $\tau \in \hh_3$. Let $k$ be a characteristic in degree three such that both $k$ and $k\a$ are even. 
Let $\ff$ be a fundamental system with sum of all odd characteristics equal to $k$ and with decomposition $\ff=\{\alpha,\beta,\gamma,\delta,\kappa,\lambda,\mu,\nu\}$ such that $\a=\alpha\beta\gamma\delta$. For the existence of such a fundamental system we refer to Section~\ref{fundamental}. 

Let $2j$ be the sum of all eight characteristics in $\ff$. We recall that $j$ is then a characteristic. We put (cf.\ \cite[eqn. (13) on p.~49]{frob})
\begin{equation} \label{h}  h_\a(\tau)= (j,\a) \prod_{\varepsilon \in \{\kappa,\lambda,\mu,\nu\}} \theta_{k\beta\gamma\delta\varepsilon}(0,\tau) \theta_{k\alpha\gamma\delta\varepsilon}(0,\tau)\theta_{k\alpha\beta\delta\varepsilon}(0,\tau) \theta_{k\alpha\beta\gamma\varepsilon}(0,\tau) \,  . 
\end{equation}
It follows from Section~\ref{fundamental} that the 16 characteristics occurring in the product are all even. By Lemma~\ref{difference} each non-zero characteristic in degree three can be written in exactly 16 ways as the difference of an even and an odd characteristic. We conclude that the 16 characteristics occurring in the product are exactly those 16 even characteristics that added to $\a$ yield an odd characteristic. 

A small verification shows that $h_\a(\tau)$ is independent of the choice of even characteristic $k$, of fundamental system $\ff$, and of four-element subset $\alpha,\beta,\gamma,\delta$ of $\ff$. Moreover $h_\a(\tau)$ is invariant under replacing $\a$ by another representative of its class modulo $\zz^3 \times \zz^3$. We conclude that $h_\a(\tau)$ is an invariant of the non-zero two-torsion point $\tau \a'+  \a'' \bmod L_\tau$ of the abelian threefold $A_ \tau=\cc^3/L_\tau$.  

We set $h_0(\tau)$ to be equal to zero. From (\ref{h}) we see that when viewed as a function of $\tau \in \HH_3$ each $h_\a(\tau)$ is a Siegel modular form of weight eight on $\HH_3$. We call $h_\a(\tau)$ the {\em reduced value} at $\tau \in \HH_3$ determined by the characteristic~$\a$.

\subsection{Frobenius' theta function} \label{def_frob_theta}
For $\tau \in \HH_3$ we denote by $\varGamma_{00,\tau} \subset V_{2,\tau}$  the linear subspace of $V_{2,\tau}$ consisting of functions with vanishing multiplicity $\geq 4$ at the origin. When $\tau$ is indecomposable, the space $\varGamma_{00,\tau}$ is one-dimensional, cf.\  \cite[Section~3, p.~36]{frob}, or \cite[Proposition~1.1]{vdgvg}.
The following result characterizes Frobenius' theta function $\phi=\phi(z,\tau)$ as an element of $V_{2,\tau}$. 
\begin{thm} \label{frob_mod_form} Assume that $ \tau \in \HH_3$ is an indecomposable matrix.  Then there exists a unique  element $\phi=\phi(z,\tau)$ in  $V_{2, \tau}$ such that $\phi$ lies in $\varGamma_{00,\tau}$, and such that
for all characteristics $\a  \in \frac{1}{2}\zz^3 \times \frac{1}{2}\zz^3$ the identity
\begin{equation} \label{reduced_value}  \phi( \tau \a' +\a'', \tau) = h_\a(\tau) \, \eta^2_\a(0,\tau) 
\end{equation}
holds. Here $h_a(\tau)$ denotes the reduced value (\ref{h}) determined by $a$, and $\eta_a(z,\tau)$ is the exponential factor (\ref{exp_factor}). The theta function $\phi$ generates $\varGamma_{00,\tau}$ as a $\cc$-vector space.
\end{thm}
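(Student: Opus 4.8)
The plan is to construct $\phi$ explicitly from the reduced values $h_\a(\tau)$ and then verify that it has the required properties, using the spanning of $V_{2,\tau}$ by the squares $\theta_\a^2$. First I would set
\[
\phi(z,\tau) = \sum_{\a} c_\a(\tau)\, \theta_\a^2(z,\tau)
\]
for suitable coefficients $c_\a(\tau)$, where $\a$ runs over the $2^{2g}=64$ characteristics in degree three. The idea is that the evaluation functionals $f \mapsto f(\tau\a'+\a'',\tau)$ at the $64$ two-torsion points, combined with the known transformation (\ref{theta_translate}) relating $\theta$ and $\theta_\a$, essentially diagonalize the action of $V_{2,\tau}$: using (\ref{theta_translate}) and (\ref{functional}) one computes $\theta_\b^2(\tau\a'+\a'',\tau)$ in closed form as a sign times $\theta_{\b+\a}^2(0,\tau)\,\eta_\a^2(0,\tau)^{-1}$ (up to the precise exponential bookkeeping), so that the prescribed values (\ref{reduced_value}) translate into a linear system for the $c_\a$ whose matrix is (a rescaling of) the character table of $(\zz/2)^{2g}$, hence invertible. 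Solving this system gives an explicit $\phi$ satisfying (\ref{reduced_value}) for every $\a$; one then checks it is not identically zero (e.g.\ because some $h_\a(\tau)\neq 0$ for indecomposable $\tau$, which follows from the fact that a non-vanishing reduced value exists — the product in (\ref{h}) cannot be identically zero since the even characteristics appearing are generically non-vanishing).

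The next step is to show this $\phi$ lies in $\varGamma_{00,\tau}$, i.e.\ vanishes to order $\geq 4$ at the origin. Here I would not attempt a direct Taylor expansion; instead I would invoke the characterization already recorded in the excerpt, namely that for indecomposable $\tau\in\HH_3$ the space $\varGamma_{00,\tau}$ is one-dimensional (\cite[Section~3, p.~36]{frob} or \cite[Proposition~1.1]{vdgvg}). Combined with the reference to Frobenius' own characterization, the cleanest route is: the vanishing conditions (\ref{reduced_value}) at the $63$ non-zero two-torsion points, together with $h_0(\tau)=0$ forcing $\phi(0,\tau)=0$, plus the order-four vanishing, pin down a unique element of $V_{2,\tau}$. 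So I would argue that $\varGamma_{00,\tau}$ being one-dimensional and nonzero, its generator $\psi$ automatically has prescribed values at all two-torsion points (up to scalar), and a short argument identifies those values with the $h_\a(\tau)\,\eta_\a^2(0,\tau)$ after fixing the normalization; this is in fact the content of Frobenius' work that the section is reviewing, so I would lean on \cite[eqn.\ (13), p.~49]{frob} and the surrounding discussion for the identification of the values of the generator.

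For uniqueness: any $\phi'$ satisfying both conditions lies in $\varGamma_{00,\tau}$, which is one-dimensional, so $\phi' = \lambda\phi$ for some $\lambda\in\cc$; evaluating (\ref{reduced_value}) at any $\a$ with $h_\a(\tau)\neq 0$ (such $\a$ exists for indecomposable $\tau$) forces $\lambda=1$. The final assertion that $\phi$ generates $\varGamma_{00,\tau}$ is then immediate from one-dimensionality of $\varGamma_{00,\tau}$ together with $\phi\neq 0$.

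The main obstacle I anticipate is the identification of the values of the canonical generator of $\varGamma_{00,\tau}$ with the reduced values $h_\a(\tau)$ in (\ref{h}) — i.e.\ showing that the explicit solution of the linear system actually coincides with a generator of $\varGamma_{00,\tau}$. This amounts to verifying that the specific combination of products of theta-constants defining $h_\a$ is precisely the one forced by the order-four vanishing at the origin, which is the technical heart of Frobenius' argument. One clean way to handle it is to check the order-four vanishing of $\sum_\a c_\a\,\theta_\a^2$ directly by differentiating under the sum and using the classical Jacobi-type relations among theta-constants and their second derivatives at the origin in genus three (the "Jacobi derivative formula" and its generalizations); but since the excerpt explicitly frames this as a review of \cite{frob}, the expository expectation is probably that one quotes Frobenius for the coincidence and only re-derives the uniqueness and the $V_{2,\tau}$-expansion, which are the parts that are genuinely short.
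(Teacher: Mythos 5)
Your primary plan---solving a $64\times 64$ linear system for the coefficients $c_\a$ whose matrix is ``(a rescaling of) the character table of $(\zz/2)^{2g}$, hence invertible''---cannot work as stated. The space $V_{2,\tau}$ has dimension $2^g=8$, while there are $2^{2g}=64$ two-torsion points; the evaluation map $V_{2,\tau}\to\cc^{64}$ at the two-torsion points therefore has image of dimension at most $8$, and the matrix $\bigl(\theta_b^2(\tau\a'+\a'',\tau)\bigr)_{\a,b}$, which up to signs and exponential factors is $\bigl(\theta_{\a b}^2(0,\tau)\bigr)_{\a,b}$, has rank at most $8$---it is nowhere near invertible, since the $64$ functions $\theta_b^2$ span only an $8$-dimensional space. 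Consequently, prescribing all $64$ values $h_\a(\tau)\,\eta_\a^2(0,\tau)$ is a heavily overdetermined problem: the nontrivial content of the existence claim is precisely that these $64$ numbers satisfy the $56$ linear compatibility relations that values of second-order theta functions at two-torsion points must obey. That consistency is what Frobenius' construction establishes, and it cannot be extracted by inverting a group matrix. Your own ``main obstacle'' paragraph correctly locates this as the technical heart; the paper does not attempt to reprove it and simply cites \cite[eqn.\ (9) on p.~47]{frob} for the existence of an element of $\varGamma_{00,\tau}$ with the stated values.

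Your fallback route---quote Frobenius for existence, then deduce uniqueness and the generation statement from the one-dimensionality of $\varGamma_{00,\tau}$ together with the existence of a characteristic $\a$ with $h_\a(\tau)\neq 0$ and the non-vanishing of $\eta_\a(0,\tau)$---is exactly the proof in the paper, so that part stands. The one point to tighten there is the justification that some $h_\a(\tau)\neq 0$: ``generically non-vanishing'' is not enough, since the claim is needed for the given indecomposable $\tau$. The correct argument, which the paper gives, is that for $\tau\in\HH_3$ indecomposable there is at most one vanishing even theta characteristic; one can then choose a non-zero $\a$ so that none of the $16$ even characteristics occurring in the product (\ref{h}) is the vanishing one (equivalently, so that $k\a$ is even, where $k$ is the vanishing characteristic if it exists), and for such $\a$ the reduced value $h_\a(\tau)$ is non-zero.
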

\begin{proof} In \cite{frob} Frobenius constructs an element $\phi$ of $V_{2,\tau}$ that lies in $\varGamma_{00,\tau} $, and he shows that for that $\phi$ and for each $\a$ equation (\ref{reduced_value}) is satisfied, cf.\ \cite[eqn. (9) on p.~47]{frob}. This proves the existence claim. Next we remark that there is at most one vanishing even theta characteristic in degree three. This implies that there exist characteristics $\a$ such that $h_\a(\tau)$ does not vanish. Note that $\eta_\a(0,\tau)=  \exp(- \pi i \,{}^t \a' \tau \a'  - 2\pi i \, {}^t \a' \a'')$ does not vanish either. As $\varGamma_{00,\tau}$ is one-dimensional, this proves uniqueness. It is now also clear that $\phi$ generates $\varGamma_{00,\tau}$.
\end{proof}

\subsection{The difference surface} \label{difference_surface}
Let $X$ be a compact and connected Riemann surface of genus three, and let $J$ be its jacobian. We have a natural difference map $X \times X \to J$ given by $(x,y) \mapsto [x-y]$. The {\em difference surface} of $J$, notation $F_X$, is the pushforward of the fundamental class of $X \times X$ along the difference map. As the difference map is generically finite onto its image this gives a structure of Weil divisor, and hence of Cartier divisor, on the cycle $F_X$.  Note that actually the difference map is generically of degree two onto its image if $X$ is hyperelliptic, and generically an isomorphism onto its image if $X$ is not hyperelliptic.

Let $\tau \in \HH_3$. We note that the matrix $\tau$ is indecomposable if and only if the principally polarized abelian threefold $A_ \tau$ is the jacobian of a compact Riemann surface of genus three. 
The divisor of Frobenius' theta function has the following nice interpretation, cf.\ \cite[eqns.\ (4) and (5) on p.~37]{frob}, see also~\cite[Proposition~2.1]{vdgvg}. 
\begin{thm} \label{divisor_F} Let $\tau \in \HH_3$ be an indecomposable matrix, and assume that $A_\tau$ is the jacobian of the compact connected Riemann surface $X$ of genus three.  The effective Cartier divisor determined by Frobenius' theta function $\phi(z,\tau)$ on $A_ \tau$ is equal to the difference surface $F_X$ in $A_\tau$.
\end{thm}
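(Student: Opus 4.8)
The plan is to identify the two effective divisors $\divisor(\phi)$ and $F_X$ on $A_\tau$ by first showing that they define the same class in $\Pic(A_\tau)$, and then showing that they share a common component, so that they must coincide exactly. Since $\phi \in V_{2,\tau}$, the divisor $\divisor(\phi)$ lies in the linear system $|2\,\varTheta_\tau|$, so as a first step I would recall (or re-derive from the Matsusaka--Ran characterization, or from the classical fact that for a non-hyperelliptic jacobian the difference surface is the theta divisor translated appropriately and summed) that $F_X$ is algebraically equivalent to $2\,\varTheta_\tau$. Concretely, the difference map $X \times X \to J$ pulls back $\varTheta_\tau$ to a divisor whose class one can compute via the Abel--Jacobi map: writing $W_1 = \{[x-x_0]\}$ for a base point, one has $F_X = W_1 + (-W_1)$ as cycles, and $W_1 \equiv \varTheta_\tau$ up to translation while $-W_1 \equiv \varTheta_\tau$ as well (the theta divisor in genus three is symmetric up to a two-torsion translate), giving $F_X \in |2\varTheta_\tau|$ after accounting for the translates. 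This places $F_X = \divisor(\psi)$ for some $\psi \in V_{2,\tau}$, well-defined up to scalar.

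The second and main step is to pin down $\psi$ inside $V_{2,\tau}$ as (a scalar multiple of) Frobenius' function $\phi$. For this I would use the characterization already available in Theorem~\ref{frob_mod_form}: $\phi$ is the unique element of $V_{2,\tau}$ lying in $\varGamma_{00,\tau}$, i.e.\ vanishing to order $\geq 4$ at the origin, and $\varGamma_{00,\tau}$ is one-dimensional for indecomposable $\tau$. So it suffices to check that $\psi$ — the section cutting out $F_X$ — vanishes to order at least $4$ at $0 \in A_\tau$. Geometrically the origin is the point $[x-x]$, and the difference map $X \times X \to J$ contracts the diagonal $\Delta_X \subset X \times X$ to $0$; one expects $F_X$ to be highly singular at the origin precisely because an entire curve's worth of the source maps there. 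I would make this quantitative by a local analysis of the difference map near the diagonal: choose a local coordinate $z$ on $X$, so that $(x,y) \mapsto [x-y]$ is given near a diagonal point by the power series $\big(z(x)-z(y)\big)\omega_1 + \tfrac12\big(z(x)^2 - z(y)^2\big)\omega_2 + \cdots$ in terms of a normalized basis of holomorphic differentials, and read off that the image meets a generic small ball around $0$ with multiplicity growing like the expected order. The cleanest route is probably to compute the multiplicity of $F_X$ at $0$ directly as an intersection/branch count, showing it equals $4$ (which is consistent with the degree-two-onto-image phenomenon in the hyperelliptic case and degree-one in the non-hyperelliptic case being absorbed into the local structure), hence $\operatorname{mult}_0 \divisor(\psi) \geq 4$, hence $\psi \in \varGamma_{00,\tau} = \cc\cdot\phi$.

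Having shown $\divisor(\psi) = \divisor(\phi)$ as divisors (both being the zero divisor of a generator of the line $\varGamma_{00,\tau}$), we get $F_X = \divisor(\phi)$ on the nose, since the scalar ambiguity does not affect the divisor. Alternatively, and perhaps more robustly, once one knows $F_X$ and $\divisor(\phi)$ are linearly equivalent effective divisors in $|2\varTheta_\tau|$ and that $\divisor(\phi)$ vanishes to order $\geq 4$ at $0$ while $F_X$ vanishes to order $\geq 4$ at $0$, one can argue that the difference $\divisor(\phi) - F_X$ is an effective-minus-effective divisor of degree zero in a two-dimensional abelian variety slice all of whose ``extra'' vanishing is concentrated at one point, forcing it to be zero; but I would prefer the uniqueness-via-$\varGamma_{00,\tau}$ argument as it is the shortest.

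The step I expect to be the main obstacle is the local multiplicity computation at the origin: proving cleanly that $F_X$ vanishes to order at least $4$ (and not merely $2$ or $3$) at $0 \in A_\tau$, uniformly in the hyperelliptic and non-hyperelliptic cases. This requires understanding the local geometry of the difference map along the diagonal with enough precision — essentially a jet-level computation with the Abel--Jacobi map — and care is needed because the difference map is $2{:}1$ onto its image in the hyperelliptic case, which changes how source multiplicities translate into divisor multiplicities. One should also verify that $F_X$ is reduced (as a Cartier divisor) so that the equality of divisors is literally an equality and not just up to multiplicities; this follows from the difference map being generically injective (non-hyperelliptic case) or the $2{:}1$ map being accounted for, as noted in the statement's preamble. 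I would structure the write-up so that Theorem~\ref{frob_mod_form} does all the heavy lifting on the Frobenius side, leaving only the geometry of $F_X$ — its class and its singularity at $0$ — to be established directly.
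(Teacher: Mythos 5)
First, a point of calibration: the paper does not actually prove Theorem~\ref{divisor_F}; it cites Frobenius (eqns.\ (4) and (5) on p.~37 of \cite{frob}) and \cite[Proposition~2.1]{vdgvg}. Your overall strategy --- place $F_X$ in the linear system $|2\,\varTheta_\tau|$, show it vanishes to order $\geq 4$ at the origin, and invoke the one-dimensionality of $\varGamma_{00,\tau}$ from Theorem~\ref{frob_mod_form} --- is exactly the standard route taken in the cited reference, and there is no circularity since Theorem~\ref{frob_mod_form} is established independently. The multiplicity step, which you correctly flag as the main obstacle, does work out: the jet computation along the diagonal shows the projectivized tangent cone of $F_X$ at $0$ is the canonical image of $X$, so the multiplicity is $4$ both for non-hyperelliptic $X$ (degree-one map onto a plane quartic) and for hyperelliptic $X$ (degree-two map onto a conic, with the factor of two absorbed into the cycle-theoretic pushforward defining $F_X$).

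The genuine gap is in your first step. The assertion ``$F_X = W_1 + (-W_1)$ as cycles, and $W_1 \equiv \varTheta_\tau$'' is dimensionally wrong: in genus three $W_1$ is a curve, not a divisor, and $F_X$ is the Pontryagin sum (image of the addition map $W_1 \times (-W_1) \to A_\tau$), not a sum of divisors. The correct computation is via the Poincar\'e formula, $[W_1] * [-W_1] = \binom{2}{1}[W_2] = 2[\varTheta_\tau]$, which gives only \emph{algebraic} equivalence $F_X \equiv 2\,\varTheta_\tau$. Combined with the symmetry $[-1]^*F_X = F_X$ this pins down $\oo(F_X)$ only up to twist by a two-torsion point of $\Pic^0(A_\tau)$, since all symmetric theta divisors $\varTheta_a$ satisfy $2\varTheta_a \sim 2\varTheta_\tau$ but $2\varTheta_\tau \otimes P$ with $P$ nonzero two-torsion is a different (symmetric, non-totally-symmetric) bundle with the same $h^0 = 8$. ``Accounting for the translates'' is therefore not bookkeeping but the actual content of this step; one needs either to compute $\delta^*\oo(\varTheta_a)$ on $X \times X$ explicitly via Riemann's theorem, or to check that $F_X$ is totally symmetric (even multiplicities at all two-torsion points), to conclude $P = 0$ and hence that the equation of $F_X$ genuinely lies in $V_{2,\tau}$. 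Once that is supplied, the rest of your argument closes correctly.
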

One beautiful aspect of Frobenius' theta function $\phi(z,\tau)$ is that the projectivized tangent cone of $F_X$ at the origin of $A_\tau$ coincides with the canonical image of $X$ inside $\mathbb{P}^2$, cf. \cite[around eqn.\ (6) on p.~37]{frob} or \cite[Proposition~2.6]{vdgvg}. 

\subsection{Modular properties}  \label{mod_prop}
One may generate explicit formulas for Frobenius' theta function $\phi(z,\tau)$ by writing down a basis of $V_{2,\tau}$ and using (\ref{reduced_value}) for sufficiently many characteristics~$a$. For example one could consider the squares of a suitable set of theta functions with characteristic (cf.\ Section~\ref{theta_with_char}). 
This approach leads to the following explicit formula, cf.\ \cite[eqn. (15) on p.~49]{frob}. 
\begin{thm} \label{thm_explicit_frob} Let $\tau \in \hh_3$ be an indecomposable matrix.
Let $\ff$ be a fundamental system of characteristics in degree three.  Let $k$ be the sum of the odd characteristics in $\ff$. Let $b$ be any characteristic in degree three. Then the identity 
\begin{equation} \label{explicit_frob}
\theta_k(0,\tau)^2 \, \phi(z,\tau) = \sum_{\lambda \in \ff} (kb\lambda,b\lambda) \, h_{b\lambda}(\tau) \, \theta_{kb\lambda}(z,\tau)^2   
\end{equation}
holds in $ V_{2, \tau}$. 
\end{thm}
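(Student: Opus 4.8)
\textbf{Proof plan for Theorem~\ref{thm_explicit_frob}.}

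The plan is to verify the identity \eqref{explicit_frob} by checking that both sides, as elements of $V_{2,\tau}$, take the same reduced values $\phi(\tau\a'+\a'',\tau)$ at all characteristics $\a$, and then to invoke the uniqueness part of Theorem~\ref{frob_mod_form} together with the fact that the squares $\theta_\a^2$ span $V_{2,\tau}$. More precisely, write $\psi(z,\tau) = \sum_{\lambda\in\ff}(kb\lambda,b\lambda)\,h_{b\lambda}(\tau)\,\theta_{kb\lambda}(z,\tau)^2$ for the right-hand side, divided by $\theta_k(0,\tau)^2$. Since each $\theta_{kb\lambda}^2$ lies in $V_{2,\tau}$ and $V_{2,\tau}$ is a $\cc$-vector space, $\psi(z,\tau)$ is a well-defined element of $V_{2,\tau}$ provided $\theta_k(0,\tau)\neq 0$ — which holds since there is at most one vanishing even characteristic and, by choosing the fundamental system appropriately, we may take $k$ not to be it (or argue by continuity once the identity is established on a dense open set). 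The goal is then to show $\psi=\phi$.

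First I would evaluate $\psi$ at a two-torsion point $\tau\a'+\a''$. Using the translation formula \eqref{theta_translate} and the functional equation \eqref{functional}, each term $\theta_{kb\lambda}(\tau\a'+\a'',\tau)^2$ can be rewritten: one has $\theta_{kb\lambda}(z,\tau)\eta_{kb\lambda}(z,\tau) = \theta(z+\tau(kb\lambda)'+(kb\lambda)'',\tau)$, so $\theta_{kb\lambda}(\tau\a'+\a'',\tau)^2$ is a sign times $\theta_{\a+kb\lambda}(0,\tau)^2$ times an explicit exponential factor. The upshot is that $\psi(\tau\a'+\a'',\tau)$ becomes a sum over $\lambda\in\ff$ of $h_{b\lambda}(\tau)\,\theta_{\a+kb\lambda}(0,\tau)^2$ weighted by signs and the common factor $\eta_\a^2(0,\tau)/\theta_k(0,\tau)^2$. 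Then I would use the definition \eqref{h} of $h_{b\lambda}(\tau)$, which is itself a product of sixteen even theta-nullwerte, namely the product over the even characteristics $\mu$ with $b\lambda+\mu$ odd. The combinatorics of the fundamental system $\ff$ from Section~\ref{fundamental} — in particular that $\{k\alpha\beta\}$ over two-element subsets gives every odd characteristic once, and that $k\alpha\beta\gamma\delta\equiv k\kappa\lambda\mu\nu$ — is what organizes these products. The claim reduces to an identity among products of theta-nullwerte summed over $\ff$, which should collapse (after the sign bookkeeping with the symbols $(\cdot)$, $(\cdot,\cdot)$, $((\cdot,\cdot))$) to exactly $h_\a(\tau)\,\theta_k(0,\tau)^2$, matching $\phi(\tau\a'+\a'',\tau) = h_\a(\tau)\eta_\a^2(0,\tau)$ from \eqref{reduced_value}.

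The main obstacle will be precisely this last combinatorial-cum-sign identity: showing that $\sum_{\lambda\in\ff}(kb\lambda,b\lambda)\,h_{b\lambda}(\tau)\,\theta_{\a+kb\lambda}(0,\tau)^2 = \theta_k(0,\tau)^2\,h_\a(\tau)$ for every $\a$. This is where Frobenius' theory of fundamental systems does the real work — it is essentially a quadratic (Riemann-type) theta relation in genus three, and one must track the azygy condition $(((a,b,c)))=-1$ carefully through every term. I would handle it by first disposing of the case $h_\a(\tau)=0$ (where one must check the right-hand sum also vanishes termwise or by cancellation), then treating $\a$ with $h_\a(\tau)\neq 0$; in the latter case dividing through by $h_\a(\tau)$ reduces everything to ratios of theta-nullwerte that Frobenius' relations control. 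Once the reduced values of $\psi$ and $\phi$ agree at all characteristics $\a$, I note that the differences of two-torsion translates span enough of $V_{2,\tau}$ (the evaluation map $V_{2,\tau}\to\cc^{64}$ at the $64$ two-torsion points is injective, since $V_{2,\tau}$ has dimension $8$ and the $\theta_\a^2$ are linearly independent and separated by these evaluations) to force $\psi=\phi$ as elements of $V_{2,\tau}$, which is the assertion \eqref{explicit_frob}. Finally, the genericity hypothesis $\theta_k(0,\tau)\neq 0$ is removed by noting both sides of \eqref{explicit_frob} are holomorphic in $\tau$ on the indecomposable locus, so the identity persists by continuity.
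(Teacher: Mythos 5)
Your overall strategy --- realize both sides as elements of $V_{2,\tau}$ and pin them down by their values at two-torsion points via the characterization (\ref{reduced_value}) --- is the strategy the paper alludes to (``writing down a basis of $V_{2,\tau}$ and using (\ref{reduced_value}) for sufficiently many characteristics''), but your execution has two genuine gaps. First, you defer the entire content of the theorem to the unproved identity $\sum_{\lambda\in\ff}(kb\lambda,b\lambda)\,h_{b\lambda}(\tau)\,\theta_{\a+kb\lambda}(0,\tau)^2=(\text{sign})\cdot\theta_k(0,\tau)^2\,h_\a(\tau)$ for \emph{all} $64$ characteristics $\a$; this is a Riemann-type quartic relation among Thetanullwerte that is essentially equivalent to the theorem itself, so labelling it ``the main obstacle'' and sketching how you ``would handle it'' leaves the proof without its core. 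Second, your concluding step rests on the claim that the evaluation map $V_{2,\tau}\to\cc^{64}$ at the two-torsion points is injective ``since the $\theta_\a^2$ are linearly independent'' --- but the $64$ squares $\theta_\a^2$ span the $8$-dimensional space $V_{2,\tau}$ and are therefore highly dependent, so the stated justification is false. Injectivity of evaluation at two-torsion is true for indecomposable $\tau$ but is itself a nontrivial fact about the $|2\varTheta|$-embedding and would need its own argument.

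Both gaps disappear if you use the combinatorics of fundamental systems at the right moment, which is what ``sufficiently many characteristics'' means here: only the $8$ characteristics $\a=b\mu$, $\mu\in\ff$, are needed, not all $64$. The $8$ squares $\theta_{kb\lambda}^2$, $\lambda\in\ff$, form a \emph{basis} of $V_{2,\tau}$, so one may write $\phi=\sum_{\lambda\in\ff}c_\lambda\,\theta_{kb\lambda}^2$ and solve for the $c_\lambda$. Evaluating at the two-torsion point $e_{b\mu}=\tau(b\mu)'+(b\mu)''$ and using (\ref{theta_translate}), the term indexed by $\lambda$ is a unit times $\theta_{k\lambda\mu}(0,\tau)^2$; by the property you yourself quote from Section~\ref{fundamental} --- the characteristics $k\alpha\beta$ over two-element subsets of $\ff$ exhaust the \emph{odd} characteristics --- one has $\theta_{k\lambda\mu}(0,\tau)=0$ for all $\lambda\neq\mu$, while $k\mu\mu=k$. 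The linear system is therefore diagonal: comparing the single surviving term with $\phi(e_{b\mu},\tau)=h_{b\mu}(\tau)\,\eta_{b\mu}^2(0,\tau)$ from (\ref{reduced_value}) determines $c_\mu=(kb\mu,b\mu)h_{b\mu}(\tau)/\theta_k(0,\tau)^2$ directly, with only an $8\times 8$ sign bookkeeping and no quartic theta relation, no verification at the remaining $56$ points, and no appeal to injectivity of the full evaluation map. Your continuity argument for removing the hypothesis $\theta_k(0,\tau)\neq 0$ is fine and is still needed at the end.
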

Formula~(\ref{explicit_frob}) gives insight in the modular properties of $\phi$.
Let $\mathcal{I}_3 \subset \HH_3$ denote the open subset consisting of indecomposable matrices. Note  that the set $\mathcal{I}_3$ is $\Sp(6,\zz)$-invariant.  Recall that each reduced value $h_a(\tau)$ is a Siegel modular form of weight eight. Inspection of (\ref{explicit_frob}) shows that Frobenius' theta function $\phi(z,\tau)$ transforms as a Siegel-Jacobi form of weight eight and index one over $\cc^3 \times \mathcal{I}_3$. As the locus of decomposable abelian threefolds is of codimension two in $\hh_3$, we may conclude the following.
\begin{thm}  \label{extends_as} Frobenius' theta function $\phi(z,\tau)$ extends as a Siegel-Jacobi form of weight eight and index one over  $\cc^3 \times \hh_3$. 
\end{thm}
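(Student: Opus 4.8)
The plan is to extend $\phi(z,\tau)$ holomorphically across the decomposable locus by a removable-singularity theorem, and then to propagate its transformation behaviour from the dense open subset $\cc^3 \times \mathcal{I}_3$ to all of $\cc^3 \times \HH_3$ by the identity theorem. So I would begin by recording the geometry of the excluded locus. The decomposable locus in $\aa_3$ is the image of the product morphism $\aa_1 \times \aa_2 \to \aa_3$ (with product polarization), a closed analytic subvariety which, as recalled in Section~\ref{mod_prop}, has codimension two. Pulling back along the quotient map $\HH_3 \to \aa_3$ shows that $\HH_3 \setminus \mathcal{I}_3$ is a closed analytic subset of $\HH_3$ of codimension two, and hence $\cc^3 \times (\HH_3 \setminus \mathcal{I}_3)$ is a closed analytic subset of $\cc^3 \times \HH_3$ of codimension two.

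By Theorem~\ref{frob_mod_form}, $\phi(z,\tau)$ is a holomorphic function on $\cc^3 \times \mathcal{I}_3$, which is the complement of this analytic subset. The second Riemann extension theorem — the version for analytic subsets of codimension at least two, which requires no boundedness hypothesis — then produces a unique holomorphic extension of $\phi$ to all of $\cc^3 \times \HH_3$. Alternatively, and more in the spirit of Section~\ref{mod_prop}, one can exhibit the extension concretely: formula~(\ref{explicit_frob}), applied with $\ff$ running over the $36$ pencils of fundamental systems, writes $\phi(z,\tau)$ on $\mathcal{I}_3$ in $36$ ways as the quotient of a holomorphic Siegel-Jacobi form by a squared even theta-nullwert $\theta_k(0,\tau)^2$, and away from the common zero locus $Z$ of all $36$ even theta-nullwerte these are holomorphic formulas for $\phi$; since at most one even characteristic is vanishing at an indecomposable $\tau$ (as used in the proof of Theorem~\ref{frob_mod_form}), $Z$ lies inside $\HH_3 \setminus \mathcal{I}_3$, so the extension theorem is needed at worst over a codimension-two set.

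It then remains to check that the extended $\phi$ satisfies the functional equations for $(\zz^3 \times \zz^3) \rtimes \Sp(6,\zz)$ recorded in Section~\ref{siegel-jacobi} that define a Siegel-Jacobi form of weight eight and index one. By Section~\ref{mod_prop} these hold on $\cc^3 \times \mathcal{I}_3$. For each fixed group element both sides of the relevant identity are holomorphic on the connected complex manifold $\cc^3 \times \HH_3$ — the automorphy factors $\det(C\tau+D)^8$, $\exp(2\pi i\,{}^t z (C\tau+D)^{-1} C z)$ and $e_n(z,\tau)^2$ are holomorphic there, $\mathcal{I}_3$ is $\Sp(6,\zz)$-stable, and the lattice action preserves $\cc^3 \times \HH_3$ — and they agree on the dense open subset $\cc^3 \times \mathcal{I}_3$, hence everywhere by the identity theorem; this gives the claim in the weak sense of Section~\ref{siegel-jacobi}. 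The argument carries no real obstacle: the only points requiring attention are the codimension-two bound for the decomposable locus (available from Section~\ref{mod_prop}) and the use of the codimension-$\geq 2$ removable-singularity theorem, which crucially does not assume any a priori control on $\phi$ as one approaches $\HH_3 \setminus \mathcal{I}_3$.
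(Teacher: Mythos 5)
Your proposal is correct and follows essentially the same route as the paper: the paper likewise deduces the transformation law on $\cc^3 \times \mathcal{I}_3$ from formula~(\ref{explicit_frob}) and then extends across the codimension-two decomposable locus. You have merely made explicit the removable-singularity and identity-theorem steps that the paper leaves implicit.
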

In \cite{frob} many more explicit formulas for $\phi(z,\tau)$ are discussed. We especially would like to point to the discussion in \cite[Section~14]{frob}, where a connection is made with the Schottky-Igusa modular form in genus four. This circle of ideas is also the point of departure of the papers \cites{cfg, gsm, vdgvg}. The discussion in \cites{cfg, gsm, vdgvg}  shows that actually Frobenius' theta function $\phi$ is a Siegel-Jacobi form with respect to the {\em full} modular group $\Sp(6,\zz)$. 

Alternatively one can see this from the description of the zero locus of $\phi$ given by Theorem~\ref{divisor_F}. 
Let $\mm_3$ denote the moduli space of compact Riemann surfaces of genus three.
Let $p \colon \jj_3 \to \mm_3$ denote the universal jacobian over $\mm_3$, and let $p^1 \colon \CC_3 \to \mm_3$ denote the universal Riemann surface over $\mm_3$. We have a natural difference map $\delta \colon \CC_3 \times_{\mm_3} \CC_3 \to \jj_3$, which is proper. The {\em universal difference surface}, notation $F$, is  the pushforward of the fundamental class of $\CC_3 \times_{\mm_3} \CC_3 $ along the proper map $\delta$. The map $\delta$ is generically an isomorphism onto its image. We see that the cycle $F$ has codimension one on $\jj_3$. As $\jj_3$ is smooth, we have on $F$ a natural structure of relative effective Cartier divisor. 

The quotient space $\Sp(6,\zz) \setminus \mathcal{I}_3$ is naturally identified with $\mm_3$.  The quotient of $\cc^3 \times \mathcal{I}_3$ with respect to the natural $(\zz^3 \times \zz^3 ) \rtimes \Sp(6,\zz)$-action is identified with the universal jacobian $ \jj_3 $.  Let $\ll$ be the determinant of the Hodge bundle on $\mm_3$, and let $\bb$ be the canonical rigidified line bundle on $\jj_3$ as in Section~\ref{moduli_curves}. 
\begin{thm} \label{level_one} Frobenius' theta function $\phi(z,\tau)$ descends along the full modular $(\zz^3 \times \zz^3 ) \rtimes \Sp(6,\zz)$-action on $ \cc^3 \times \mathcal{I}_3$ to give a holomorphic section $\phi$ of the line bundle $\ll^{\otimes 8} \otimes \bb$ on $\jj_3$. The divisor of $\phi$ is the universal difference surface $F$.
\end{thm}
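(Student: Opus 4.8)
The plan is to assemble this statement from the pieces already in place, the key input being Theorem~\ref{extends_as} (which says $\phi(z,\tau)$ is a Siegel-Jacobi form of weight eight and index one over all of $\cc^3\times\hh_3$) together with Theorem~\ref{divisor_F} (which identifies the fiberwise zero divisor of $\phi$ with the difference surface). First I would recall that, by the discussion in Section~\ref{siegel-jacobi}, a Siegel-Jacobi form of weight $k$ and index $m$ and level $\Sp(6,\zz)$ is by definition precisely a holomorphic section of $\ll^{\otimes k}\otimes\bb^{\otimes m}$ over $\xx_3$; pulling back along the Torelli map $t\colon\mm_3\to\aa_3$ (and using that $\mathcal{I}_3$ is the preimage of $\mm_3\cong\Sp(6,\zz)\setminus\mathcal{I}_3$ in $\hh_3$, with $\jj_3$ the corresponding quotient of $\cc^3\times\mathcal{I}_3$) then gives a holomorphic section of $\ll^{\otimes 8}\otimes\bb$ over $\jj_3$. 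So the first claim is essentially a restatement of Theorem~\ref{extends_as} combined with the identification of moduli spaces recalled just before the theorem.

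The slightly more delicate point is the compatibility of the two routes to the full-level statement: Theorem~\ref{extends_as} already gives descent over $\hh_3$, hence in particular over $\mathcal{I}_3$, so I would simply invoke it and note that restricting the resulting section of $\ll^{\otimes 8}\otimes\bb$ on $\xx_3$ along $t$ produces the desired section on $\jj_3$. For completeness I would also mention the alternative, geometric, argument sketched in the text: since the fiberwise divisor of $\phi$ over $\mathcal{I}_3$ is by Theorem~\ref{divisor_F} the difference surface $F_X$, which is manifestly $\Sp(6,\zz)$-equivariant (the difference map $X\times X\to J$ is intrinsic to $X$), the section $\phi$ is, up to a nowhere-vanishing scalar function on $\mathcal{I}_3$, invariant under the modular action; controlling that scalar via the weight-eight transformation already recorded for the reduced values $h_\a(\tau)$ and formula~(\ref{explicit_frob}) pins down the descent to a section of $\ll^{\otimes 8}\otimes\bb$.

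For the divisor claim I would argue as follows. The universal difference surface $F$ is, by construction, the pushforward of the fundamental class of $\CC_3\times_{\mm_3}\CC_3$ along the proper difference map $\delta$, which is generically an isomorphism onto its image; hence $F$ is a reduced, relative effective Cartier divisor on $\jj_3$, flat over $\mm_3$, whose fiber over the point $[X]\in\mm_3$ is exactly the difference surface $F_X\subset J$. By Theorem~\ref{divisor_F}, the divisor $\divisor(\phi)$ has this same fiber $F_X$ over every indecomposable $\tau$, i.e.\ over every point of $\mm_3$. Two relative effective Cartier divisors on $\jj_3$ that agree on every fiber over $\mm_3$ and are both flat over $\mm_3$ coincide (for instance because their difference is a relative Cartier divisor which is fiberwise zero, hence zero, $\jj_3$ being integral and the base reduced); therefore $\divisor(\phi)=F$. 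The main obstacle I anticipate is purely bookkeeping rather than conceptual: making the identification of the quotient $\cc^3\times\mathcal{I}_3$ with $\jj_3$ precise enough that ``Siegel-Jacobi form of weight eight and index one, full level'' translates cleanly into ``section of $\ll^{\otimes 8}\otimes\bb$,'' and checking that the rigidification of $\bb$ is the one implicitly used when one views $\theta_\a^2$ (hence $\phi$, via~(\ref{explicit_frob})) as having index one. Everything else follows formally from Theorems~\ref{extends_as} and~\ref{divisor_F}.
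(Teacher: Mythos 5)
Your proposal for the divisor claim is fine, but the first half rests on a misreading of Theorem~\ref{extends_as}, and that is where the real content of Theorem~\ref{level_one} lies. Theorem~\ref{extends_as} asserts that $\phi$ is a Siegel--Jacobi form of weight eight and index one at some unspecified finite level $\varGamma$: it is obtained by inspecting formula~(\ref{explicit_frob}), whose ingredients $h_{b\lambda}(\tau)$ and $\theta_{kb\lambda}(z,\tau)^2$ only transform nicely under a congruence subgroup (under the full group the characteristics are permuted and eighth roots of unity appear). A level-$\varGamma$ form gives a section of $\ll^{\otimes 8}\otimes\bb$ on $\xx_3^{\varGamma}$, not on $\xx_3$, so ``restricting along the Torelli map'' does not produce a section over $\jj_3$; the assertion that $\phi$ has \emph{full} level is precisely what Theorem~\ref{level_one} adds, and your main route is circular on this point. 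Your backup sketch correctly isolates the key geometric input --- the zero divisor $F_X$ is intrinsic to $X$, hence $\Sp(6,\zz)$-invariant, so $\phi|_\gamma/\phi$ is a nowhere-vanishing fiberwise-constant factor --- but the claim that this factor is ``pinned down'' by the weight-eight transformation of the $h_a$ is exactly the step that needs proof, and it is not supplied.

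The paper closes this gap differently: it writes $\mathcal{K}=\oo(F)$ on $\jj_3$, notes that $\mathcal{K}$ and $\bb$ are fiberwise isomorphic by Theorem~\ref{divisor_F}, and uses properness of $p$ together with $\Pic(\mm_3)=\zz\cdot[\ll]$ to get $\mathcal{K}\cong\ll^{\otimes e}\otimes\bb$ for some $e$, i.e.\ a full-level Siegel--Jacobi form $\phi'$ of weight $e$ and index one with $\divisor\phi'=F$. Then $\phi'/\phi$ is a nowhere-vanishing meromorphic modular form of weight $e-8$ on a level cover $\mm_3^{\varGamma}$, and since $\ll$ is ample there and $\mm_3^{\varGamma}$ contains complete curves, one gets $e=8$ and $\phi'/\phi$ constant; this simultaneously yields full level and $\divisor\phi=F$. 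If you want to salvage your approach, you need either this Picard-group argument or an honest verification that the nowhere-vanishing cocycle $\gamma\mapsto\phi|_\gamma/\phi$ is trivial; neither is a formality.
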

\begin{proof} Write $\mathcal{K}=\oo(F)$ for the line bundle determined by the relative effective Cartier divisor $F$ on $\jj_3$. By Theorem~\ref{divisor_F}, in each fiber of the map $p$ the surface $F$ is given by a theta function of order two. It follows that $\mathcal{K}$ and $\bb$ are fiberwise isomorphic. As the map $p$ is proper and as $\Pic(\mm_3)$ is generated by the class of $\ll$ (cf.\ \cite[Theorem~1]{ac}) it follows that there exists $e \in \zz$ such that $\mathcal{K} \cong \ll^{\otimes e} \otimes \bb$, and hence that there exists a Siegel-Jacobi form $\phi'$ of weight $e$, index one and full level in degree three such that $F = \divisor \phi'$. Combining with Theorem~\ref{extends_as} we see that the quotient $\phi'/\phi$ can be viewed as a meromorphic Siegel modular form of weight $e-8$ on $\mathcal{I}_3$ which  has no zeroes or poles on $\mathcal{I}_3$. Let $\varGamma$ be the level of $\phi$. Then $\varGamma$ determines a finite cover $\mm_3^\varGamma$ of $\mm_3$ to which $\phi'/\phi$ descends. As $\ll$ is ample on $\mm_3^\varGamma$ and as $\mm_3^\varGamma$ contains complete curves it follows that $e=8$ and $\phi'/\phi$ is a constant. We find that $\phi$ is of full level and that $F=\divisor \phi$ as claimed.
\end{proof}
Let $\a$ be a characteristic in  $\frac{1}{2}\zz^3 \times \frac{1}{2}\zz^3$. For future use we define the meromorphic function 
\begin{equation} \label{define_chi}  f_\a(z, \tau) = \phi(z,\tau)/\theta_\a(z, \tau)^2 
\end{equation}
on $\cc^3 \times \hh_3$. As $\theta_a^2$ is of weight one and index one (cf.\ Section~\ref{siegel-jacobi}) we see that $f_\a$ is a meromorphic Siegel-Jacobi form of weight seven and vanishing index. Let $\tau \in \HH_3$ be an indecomposable matrix. Then $f_\a(z, \tau)$ is an $L_\tau$-periodic function on $\cc^3$. It follows that $f_\a$ descends to give a meromorphic function on $A_ \tau$. Write $\varTheta_a$ for the divisor of $\theta_a$ and assume that $A_\tau$ is the jacobian of a compact Riemann surface $X$ of genus three. Then we see that $\divisor f_a $ equals the divisor $ F_X - 2\,\varTheta_a$.

\subsection{The hyperelliptic case} \label{hyp_case} Let $\tau \in \HH_3$, and assume that $A_ \tau$ is the jacobian of a {\em hyperelliptic} Riemann surface of genus three.  Let $k$ be the unique vanishing even theta characteristic for $ \tau$, by which we mean the unique even characteristic $k$ such that $\theta_k(0, \tau)=0$.  It is easily seen that the function $\theta_k(z, \tau)^2$ is a non-zero element  of $\varGamma_{00,\tau}$, and by Theorem~\ref{frob_mod_form} we find that $f_k(z, \tau)=\phi(z,\tau)/\theta_k(z, \tau)^2$ is a non-zero constant $\psi(\tau)$ independent of $z$. We would like to compute $\psi(\tau)$. 

Let $\a$ be a non-zero characteristic in degree three. Then by (\ref{theta_translate}) we note that $\theta_k(\tau \a' +\a'', \tau)$ vanishes if and only if $\theta_{k\a}(0, \tau)$ vanishes  if and only if $k\a$ is odd. We conclude by (\ref{reduced_value}) that $h_\a(\tau)$ vanishes if and only if $k\a$ is odd. 

Assume that $k\a$ is even. From (\ref{reduced_value}) and (\ref{theta_translate}) we obtain 
\begin{equation} \label{formula_chi} \psi(\tau) = \phi(\tau\a' +\a'',\tau)/\theta_k(\tau \a' +\a'',\tau)^2 = (k,\a)\,h_\a( \tau)/\theta_{k\a}(0, \tau)^2 \, , 
\end{equation}
cf.\ \cite[p.~50]{frob}.
We conclude that $\psi(\tau)$ transforms as a modular form of weight seven on the locus $D \subset \HH_3$ of hyperelliptic period matrices.

Following \cite{lock} we consider 
\begin{equation} \label{defxi} \xi( \tau) = \prod_{\a \colon (\a)=1, \a \neq k} \theta_\a(0, \tau)^8 \, , 
\end{equation}
the product of all non-vanishing even Thetanullwerte associated to $\tau$, raised to the eighth power.
As follows from \cite[Section~3]{lock}  the function $\xi=\xi(\tau)$ transforms as a modular form of weight~140 on $D$. 

The modular forms $\psi$ and $\xi$ are connected in the following manner. 
\begin{prop} \label{ups_and_xi} The equality $ \psi^{140}= \xi^7 $ holds on the hyperelliptic locus $D \subset \hh_3$.
\end{prop}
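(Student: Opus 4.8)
The plan is to prove the identity $\psi^{140} = \xi^7$ by comparing the two sides as modular forms on the hyperelliptic locus $D \subset \HH_3$ and reducing to a constant, then pinning down the constant. First I would observe that both $\psi^{140}$ and $\xi^7$ are modular forms of the same weight on $D$: indeed $\psi$ has weight $7$, so $\psi^{140}$ has weight $980$, and $\xi$ has weight $140$, so $\xi^7$ has weight $980$. The key structural input is formula~(\ref{formula_chi}), which expresses $\psi(\tau)$ for a fixed choice of hyperelliptic $\tau$ (with $k$ the vanishing even characteristic) in terms of $h_\a(\tau)$ and $\theta_{k\a}(0,\tau)^2$ for any $\a$ with $k\a$ even. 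Raising (\ref{formula_chi}) to a suitable power and taking a product over all valid $\a$ would turn the right-hand side into a product of reduced values $h_\a(\tau)$ (which are themselves, by (\ref{h}), products of even Thetanullwerte) divided by a product of squares of non-vanishing even Thetanullwerte, i.e.\ a power of $\xi$ up to a combinatorial bookkeeping of multiplicities.

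Concretely I would: (i) fix the vanishing even characteristic $k$ for hyperelliptic $\tau$; the non-zero characteristics $\a$ with $k\a$ even are exactly those for which $\psi(\tau) = (k,\a)\,h_\a(\tau)/\theta_{k\a}(0,\tau)^2$ holds — there are $2^{g-1}(2^g+1)-1 = 35$ such $\a$ in genus three; (ii) expand each $h_\a(\tau)$ via (\ref{h}) as a product of $16$ even Thetanullwerte, namely (as noted after (\ref{h})) the product over all $16$ even characteristics $\mu$ with $\mu\a$ odd; and (iii) count, over the product of all $35$ equations, how many times each non-vanishing even Thetanullwert $\theta_\beta(0,\tau)$ ($\beta$ even, $\beta \neq k$) appears in the numerator (from the $h_\a$'s) versus the denominator (from the $\theta_{k\a}^2$'s). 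Taking the product of all $35$ copies of (\ref{formula_chi}) gives $\psi(\tau)^{35}$ on the left; the sign factors $\prod (k,\a)$ should be handled by the sign calculus of Section~\ref{chars}. On the right, by symmetry of the combinatorics (each even $\beta \neq k$ plays an equivalent role under the affine $\mathbb{F}_2$-geometry), each such $\beta$ occurs the same number of times, so the right-hand side is a fixed power of $\xi^{1/8} = \prod_{\beta} \theta_\beta(0,\tau)$. Matching total weights ($7 \cdot 35 = 245$ on the left; the right-hand product of $\theta$'s has weight $1/2$ each) fixes that power, and after raising to the appropriate further power one lands on $\psi^{140} = \xi^7$ up to a multiplicative constant.

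To eliminate the constant I would invoke that both sides are honest holomorphic modular forms on $D$ that are non-vanishing there ($\psi$ is non-vanishing since $\phi$ and $\theta_k^2$ share the same divisor, and $\xi$ is a product of non-vanishing Thetanullwerte), so their ratio is a nowhere-zero holomorphic modular form of weight zero on the irreducible variety $D$, hence a constant; to see the constant is $1$ one evaluates at a single convenient hyperelliptic point, or alternatively observes that (\ref{formula_chi}) is an exact equality (no undetermined scalar) so the constant is forced to be $1$ provided the combinatorial multiplicities are counted correctly.

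The main obstacle I expect is step (iii): the precise combinatorial count of how often each even Thetanullwert $\theta_\beta(0,\tau)$ with $\beta \neq k$ appears in $\prod_{\a} h_\a(\tau)$, where $\a$ ranges over the $35$ characteristics with $k\a$ even. One must show this multiplicity is independent of $\beta$ and compute its exact value (and likewise the multiplicity in the denominator $\prod_\a \theta_{k\a}(0,\tau)^2$), then check the two are in the ratio $35 : \text{(denominator count)}$ needed so that after clearing $\psi^{35}$ one gets precisely $\xi$ to the power making weights match. This is a finite but delicate incidence computation in the affine geometry of theta characteristics over $\mathbb{F}_2^{2g}$; the symmetry under the stabilizer of $k$ in $\Sp(2g,\mathbb{F}_2)$ (which acts transitively on the even $\beta \neq k$) is the tool that makes it tractable, but carrying out the exponent arithmetic so that the final answer is exactly $140$ and $7$ (rather than some other proportional pair) is where the real care is needed. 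An alternative route, which I would keep in reserve, is to verify the identity directly from Frobenius' explicit formula (\ref{explicit_frob}) for $\phi$ specialized to hyperelliptic $\tau$, using known relations among hyperelliptic Thetanullwerte (the Thomae-type formulas), but I expect the combinatorial route above to be cleaner.
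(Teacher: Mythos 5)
Your plan is the same route the paper takes: multiply together the $35$ instances of (\ref{formula_chi}) (one for each non-zero $\a$ with $k\a$ even), expand each $h_\a$ via (\ref{h}) as a product of $16$ even Thetanullwerte, and count how often each non-vanishing even Thetanullwert occurs. However, you leave the decisive count --- the multiplicity with which each $\theta_\beta(0,\tau)$, $\beta$ even, $\beta\neq k$, occurs in $\prod_\a h_\a$ --- as an open ``obstacle,'' and without that number the exponents $140$ and $7$ are not actually established. The missing observation is that this count is precisely Lemma~\ref{difference}: writing $\beta=kb$, the Thetanullwert $\theta_{kb}$ occurs in $h_\a$ exactly when $k\a$ is even and $k\a b$ is odd, i.e.\ exactly when the pair $(k\a b,\,k\a)$ exhibits $b$ as the difference of an odd and an even characteristic; by Lemma~\ref{difference} there are $2^{2g-2}=16$ such $\a$, independently of $b$. (Your alternative --- transitivity of the stabilizer of $k$ in $\Sp(6,\mathbb{F}_2)$ on the even $\beta\neq k$, plus a total count $35\cdot 16/35=16$ --- would also work, but requires proving that transitivity; the lemma is already available.)

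Two smaller points. The sign worry is handled most cleanly not by the ``sign calculus'' but by raising (\ref{formula_chi}) to the fourth power before taking the product: the signs $(k,\a)$ and $(j,\a)$ are $\pm1$, and the fourth power is also what matches the eighth powers appearing in the definition (\ref{defxi}) of $\xi$. One then gets directly
\[
\psi^{140}\;=\;\prod_{\a\neq 0,\ (k\a)=1}\theta_{k\a}(0,\tau)^{-8}\,h_\a(\tau)^4\;=\;\xi^{-1}\cdot\prod_{b\neq 0,\ (kb)=1}\theta_{kb}(0,\tau)^{64}\;=\;\xi^{-1}\cdot\xi^{8}\;=\;\xi^{7},
\]
with no undetermined multiplicative constant, so your final normalization step (irreducibility of $D$, evaluation at a point) is unnecessary --- as you yourself suspect, the exactness of (\ref{formula_chi}) forces the constant to be $1$ once the multiplicities are in hand.
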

\begin{proof} We note that there are 35 non-zero characteristics $\a$ such that $k\a$ is even. From (\ref{h}) and (\ref{formula_chi}) we compute
\begin{equation} \begin{split} \psi(\tau)^{140} & = \prod_{\a \colon \a \neq 0, (k\a)=1} \theta_{k\a}(0,\tau)^{-8}\cdot h_\a(\tau)^4 \\
& = \prod_{\a \colon \a \neq 0, (k\a)=1} \left( \theta_{k\a}(0,\tau)^{-8} \prod_{b \colon (kb)=1, (k\a b)=-1 } \theta_{kb}(0,\tau)^4  \right) \\
& = \xi(\tau)^{-1} \cdot \prod_{b \colon (kb)=1, b \neq 0} \prod_{\a \colon (k\a)=1, (k\a b)=-1} \theta_{kb}(0,\tau)^4 \, . 
\end{split} 
\end{equation}
By Lemma~\ref{difference} each non-zero characteristic in degree three is written in exactly 16 ways as a difference of an odd and an even characteristic. Thus the second product on the last line runs over 16 values of $\a$. We deduce
\begin{equation}  \psi(\tau)^{140}  = \xi(\tau)^{-1} \cdot \prod_{b \colon (kb)=1, b \neq 0}  \theta_{kb}(0,\tau)^{64} 
 = \xi(\tau)^{-1} \cdot \xi(\tau)^8 = \xi(\tau)^7 \, , 
\end{equation}
which was to be shown.
\end{proof}

\subsection{The decomposable case}
In Theorem~\ref{level_one} we have seen that $\phi$ induces a holomorphic section of the line bundle $\ll^{\otimes 8} \otimes \bb$ over $\jj_3$.  It is natural to ask how $\phi$ behaves near the boundary of $\jj_3$. Theorem~\ref{vanishing_mult} below gives a partial answer to this question. Since we do not need the result in the rest of the paper, we only give a sketch of the proof.

For any $g \geq 2$ we let $\mm_g^\ct$ denote the moduli space of stable curves of genus~$g$ of {\em compact type}, i.e.\ of stable curves whose generalized jacobian is an abelian variety. Let $\varDelta \subset \mm_g^\ct$ denote the boundary of $\mm_g$ in $\mm_g^\ct$. Then $\varDelta$ is a divisor in $\mm_g^\ct$. 
\begin{thm} \label{vanishing_mult} Let $\pi \colon \overline{\jj_3}  \to \mm_3^\ct$ be the universal generalized jacobian over $\mm_3^\ct$ and write $E=\pi^{-1}\varDelta$. Frobenius' theta function $\phi$ extends as a holomorphic section of the line bundle $\ll^{\otimes 8} \otimes \bb$  over $ \overline{\jj_3}$. It has vanishing multiplicity two along~$E$.
\end{thm}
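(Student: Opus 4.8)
The extension is immediate from Theorem~\ref{extends_as}: there $\phi(z,\tau)$ is a holomorphic Siegel-Jacobi form of weight eight, index one and full level on $\cc^3\times\HH_3$, hence defines a global section of $\ll^{\otimes 8}\otimes\bb$ on the universal abelian threefold $\xx_3\to\aa_3$. On a curve of compact type the jacobian is a principally polarized abelian variety, so the Torelli map extends to a morphism $\bar t\colon\mm_3^\ct\to\aa_3$ with $\overline{\jj_3}=\mm_3^\ct\times_{\aa_3}\xx_3$, and both $\ll^{\otimes 8}\otimes\bb$ and $\phi$ on $\overline{\jj_3}$ are pulled back from $\xx_3$ along the projection $\mathrm{pr}\colon\overline{\jj_3}\to\xx_3$. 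In particular $\phi$ is holomorphic on $\overline{\jj_3}$, and it remains to compute its order of vanishing along $E$.

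The plan is to reduce this to a statement on $\cc^3\times\HH_3$. A general point of $\varDelta$ is a curve $X_0=X_1\cup_p X_2$ with $X_1$ of genus one and $X_2$ of genus two; for a smoothing $X_t$ the period matrix of $\Jac X_t$ converges to the block-diagonal matrix $\tau_0=\mathrm{diag}(\tau_2,\tau_1)$, and $\bar t$ contracts $\varDelta$ onto the codimension-two locus $\aa_1\times\aa_2\subset\aa_3$, the fibre of $\varDelta\to\aa_1\times\aa_2$ over $(X_1,X_2)$ being the curve $X_2$ itself (one forgets the gluing point). Writing $(w_1,w_2)=(\tau_{13},\tau_{23})$ for the off-diagonal entries cutting out $\aa_1\times\aa_2$, the classical first-order formula for cross-periods under opening of a separating node shows that in suitable local analytic coordinates $(z;t,u,\vec v)$ on $\overline{\jj_3}$, with $E=\{t=0\}$ and $u$ a coordinate along the contracted copy of $X_2$, the map $\mathrm{pr}$ has the form $(z;t,u,\vec v)\mapsto(z;\,t\gamma_1(u)+O(t^2),\,t\gamma_2(u)+O(t^2),\,\vec v)$, where $(\gamma_1(u):\gamma_2(u))\in\mathbb{P}^1$ is the canonical (hyperelliptic) image of $u\in X_2$. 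Hence, if $\phi(z,\tau)$ vanishes to order exactly $m$ along the locus $\mathcal D=\{w_1=w_2=0\}$ of block-diagonal period matrices, with leading term a homogeneous form $P_m(w_1,w_2)$ of degree $m$ (coefficients depending on $z$ and $\vec v$), then $\mathrm{pr}^*\phi=t^m\,P_m(\gamma_1(u),\gamma_2(u))+O(t^{m+1})$; since $P_m$ is a nonzero polynomial and $(\gamma_1(u):\gamma_2(u))$ sweeps out all of $\mathbb{P}^1$, the coefficient of $t^m$ is not identically zero, so $\phi$ vanishes along $E$ to exactly the order $m$. It therefore suffices to prove $m=2$.

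To compute $m$ I would use that, both sides being holomorphic on $\HH_3$ and agreeing on the dense open $\mathcal{I}_3$, the reduced-value identity $(\ref{reduced_value})$, namely $\phi(\tau a'+a'',\tau)=h_a(\tau)\,\eta_a^2(0,\tau)$, is valid for all $\tau\in\HH_3$ and all characteristics $a$. For $\tau_0=\mathrm{diag}(\tau_2,\tau_1)$ one has $\theta_c(0,\tau_0)=\theta_{c_2}(0,\tau_2)\,\theta_{c_1}(0,\tau_1)$ for the block decomposition $c=(c_2;c_1)$; among the sixteen \emph{even} characteristics $c$ occurring in the product $(\ref{h})$ for $h_a$ this is nonzero unless $c_2$ and $c_1$ are both \emph{odd}, in which case (as $X_2$ is a genuine genus-two curve) it vanishes to order exactly one along $\mathcal D$, the linear term in $(w_1,w_2)$ being a nonzero multiple of $\theta_{c_1}'(0,\tau_1)\,\langle(w_1,w_2),\nabla_z\theta_{c_2}(0,\tau_2)\rangle$. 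Writing $N(a)$ for the number of the sixteen $c$'s of this ``doubly odd'' type, the modular form $h_a(\tau)$ thus vanishes to order exactly $N(a)$ along $\mathcal D$. An elementary count over $\mathbb{F}_2$, using that parities and Weil pairings split along the block decomposition and that the six odd characteristics in degree two span $\mathbb{F}_2^4$, gives $N(a)\geq 2$ for every nonzero $a$, with equality attained (for instance when $a_2$ is odd of degree two and $a_1\in\{[\tfrac12;0],[0;\tfrac12]\}$). In particular every $h_a(\tau_0)$ vanishes, so $\phi(\cdot,\tau_0)\in V_{2,\tau_0}$ vanishes at all $64$ two-torsion points of $A_{\tau_0}$ and hence identically, the evaluation map $V_{2,\tau_0}\to\cc^{64}$ being the tensor product of the injective two-torsion evaluation maps in genera one and two; and taking $a$ with $N(a)=2$ makes $\phi(\tau a'+a'',\tau)=h_a(\tau)\eta_a^2(0,\tau)$ vanish to order exactly two along $\mathcal D$, so $m\leq 2$.

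For $m\geq 2$ I would differentiate $(\ref{reduced_value})$. Applying $\partial/\partial\tau_{j3}$ ($j=1,2$) and restricting to $\mathcal D$, the terms carrying $z$-derivatives of $\phi$ drop out because $\phi(\cdot,\tau_0)\equiv 0$; for the same reason the functions $\partial_{\tau_{j3}}\phi(\cdot,\tau_0)$ satisfy the automorphy defining $V_{2,\tau_0}$, so they lie in $V_{2,\tau_0}$, and one is left with $(\partial_{\tau_{j3}}\phi)(\tau_0 a'+a'',\tau_0)=(\partial_{\tau_{j3}}h_a)(\tau_0)\,\eta_a^2(0,\tau_0)$. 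But $(\partial_{\tau_{j3}}h_a)(\tau_0)=0$ whenever $N(a)\geq 2$, since differentiating one factor of the product $(\ref{h})$ still leaves at least one vanishing factor; and $\partial_{\tau_{j3}}\phi$ vanishes to order $\geq 4$ at the origin of every $A_\tau$ (differentiate the identities $\partial_z^\alpha\phi(0,\tau)=0$, $|\alpha|\leq 3$, coming from $\phi\in\varGamma_{00,\tau}$). Hence each $\partial_{\tau_{j3}}\phi(\cdot,\tau_0)$ vanishes at all $64$ two-torsion points, so vanishes identically, which means $\phi$ vanishes to order $\geq 2$ along $\mathcal D$. Combining the two bounds gives $m=2$. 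I expect the main obstacle to be the combinatorics of the count $N(a)$ over $\mathbb{F}_2^6$, together with pinning down the local shape of $\bar t$ near $\varDelta$ precisely enough to guarantee that the leading form $P_m$ survives restriction to the canonical curve; the remaining analytic input (degeneration of periods at a separating node, injectivity of the two-torsion evaluation map) is classical.
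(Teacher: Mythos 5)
Your proposal is correct, and it follows the same core strategy as the paper --- restrict $\phi$ to two-torsion sections, use the reduced-value identity $\phi(\tau a'+a'',\tau)=h_a(\tau)\eta_a^2(0,\tau)$, and count how many of the sixteen Thetanullwerte in the product \eqref{h} are ``doubly odd'' --- but the execution differs in two substantive ways. First, the paper computes orders of vanishing directly along the divisor $\varDelta\subset\mm_3^{\ct}$, quoting Igusa for the fact that the six doubly-odd Thetanullwerte vanish to order one along $\varDelta$; you instead compute orders along the codimension-two block-diagonal locus $\mathcal D\subset\hh_3$ and then transfer to $E$ via the first-order degeneration of the period matrix at a separating node, which forces you to check additionally that the leading form $P_m(w_1,w_2)$ does not die on the canonical image of $X_2$ in $\mathbb P^1$. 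This makes your route longer but more self-contained (it replaces the citation of Igusa by the heat equation plus the splitting $\theta_c(z,\tau_0)=\theta_{c_2}\theta_{c_1}$). Second, you make the lower bound $m\geq 2$ explicit --- via the injectivity of evaluation of $V_{2,\tau_0}$ at the $64$ two-torsion points and differentiation of the reduced-value identity --- whereas the paper's sketch leaves this step implicit. One further point in your favour: your count is the correct one. The paper asserts that \emph{every} nonzero $a$ has exactly two doubly-odd characteristics among its sixteen, but a direct check (or the identity $\sum_{a\neq 0}N(a)=6\cdot 28=168\neq 2\cdot 63$) shows that $N(a)=4$ when $a$ restricts to zero on the elliptic block and $N(a)=6$ when it restricts to zero on the genus-two block, with $N(a)=2$ only when both block-components are nonzero. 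This does not affect the theorem, since only $\min_a N(a)=2$ and the uniform bound $N(a)\geq 2$ are needed, and your formulation ($N(a)\geq 2$ with equality attained) is exactly what the argument requires.
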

\begin{proof} Let  $t \colon \mm_3^{\ct} \to \aa_3$ be the extended Torelli map, given by associating to a stable curve of compact type its generalized jacobian. Then the universal generalized jacobian $\pi \colon \overline{\jj_3}  \to \mm_3^\ct$ is the pullback along $t$ of the universal abelian threefold over $\aa_3$. The first statement is then clear from Theorems~\ref{extends_as} and~\ref{level_one}. 

The vanishing multiplicity of $\phi$ along $E$ can be computed by looking, locally around a generic point of $\varDelta$, at the pullback of $\phi$ along a non-zero two-torsion section. By Theorem~\ref{frob_mod_form} we see that the required vanishing multiplicity is that of the reduced value $h_\a(\tau)$ along $\varDelta$, where $\a$ is any non-zero characteristic. We recall (cf.\  (\ref{h})) that $h_\a$ is, up to a sign, the product of $16$ Thetanullwerte, where the even characteristics occurring in the product are exactly the even characteristics that added to $\a$ yield an odd characteristic. 

Now among all 36 even theta characteristics in degree three, there are exactly six that split into an odd characteristic in degree two and an odd characteristic in degree one. The six associated Thetanullwerte are precisely the Thetanullwerte that vanish along $\varDelta$, and they all do so with multiplicity one (cf.\ \cite[p.~852]{ig}). A small verification shows that among the 16 even characteristics associated to a given $h_\a$, there are exactly two that split into an odd characteristic in degree two and an odd characteristic in degree one. Hence each $h_a(\tau)$ with $a$ non-zero vanishes with multiplicity two along $\varDelta$.
\end{proof}
\begin{remark} 
It follows directly from Theorem~\ref{vanishing_mult}  that the invariant $\log\|K\|$ from (\ref{def_jacobian_invariants}) behaves like $2 \log|t|$ near $\varDelta$ if $t$ is a local equation for $\varDelta$. The invariant $\log\|H\|$ on the other hand is locally bounded near every point of $\varDelta$. Using the formulae in Theorem~\ref{main} we recover in genus three the asymptotic results for $\varphi$ and $\delta$ near $\varDelta$ as given by \cite[Proposition~6.3]{wilms} in any genus.
\end{remark}

 \section{Arakelov geometry on moduli spaces of abelian varieties} \label{sec:ar_geom_A_g}
 
 The purpose of this section is to review some fundamentals from the Arakelov geometry of the moduli space $\aa_g$ of principally polarized complex abelian varieties of dimension~$g$. More precisely we will endow the line bundles $\ll$ and $\bb$ as encountered in Section~\ref{modularity} with certain canonical smooth hermitian metrics. This results into canonical normalized, real analytic versions of Siegel modular forms resp.\ of Siegel-Jacobi forms.  Let $g \geq 2$ be an integer. 
 
 \subsection{The Hodge metric} \label{hodge_metric}
 As in Section~\ref{modularity} let $\widetilde{q} \colon \uu_g \to \hh_g$ denote the universal abelian variety over $\hh_g$, let $\Omega_{\uu_g/\hh_g}$ denote the sheaf of relative $1$-forms of $\widetilde{q}$, let $\widetilde{\ee}=\widetilde{q}_* \Omega_{\uu_g/\hh_g}$ be the Hodge bundle on $\hh_g$, and $\widetilde{\ll}$ its determinant. We define the {\em Hodge metric} on $\widetilde{\ee}$ to be the smooth hermitian metric induced by the standard symplectic form on the natural variation of Hodge structures underlying the local system $R^1\widetilde{q}_* \zz_{\uu_g}$ on $\hh_g$. Recall that the line bundle $\widetilde{\ll}$ has a standard global frame $\d z_1 \wedge \ldots \wedge \d z_g $. The induced metric $\|\cdot\|_\Hdg$ on $\widetilde{\ll}$ can be given explicitly by 
\begin{equation} \label{norm_frame}  \|   \d z_1 \wedge \ldots \wedge \d z_g \|_\Hdg(\tau) = ( \det \Im \tau)^{1/2} \, , \quad \tau \in \hh_g \, . 
\end{equation}
We denote the curvature form of the hermitian line bundle $(\widetilde{\ll}, \|\cdot\|_\Hdg)$ by $\omega_\Hdg$. 

Let $\ll$ denote the determinant of the Hodge bundle on $\aa_g = \Sp(2g,\zz) \setminus \hh_g$. By construction the Hodge metric $\|\cdot\|_\Hdg$ on $\widetilde{\ll}$ is $\Sp(2g,\zz)$-invariant. We conclude that the Hodge metric descends to give a smooth hermitian metric on the line bundle $\ll$. The resulting metric and curvature form will also be denoted by $\|\cdot\|_\Hdg$ resp.\ $\omega_\Hdg$.

\subsection{Biextension metric} \label{biext_norm}

Let $q \colon \xx_g \to \aa_g$ denote the universal abelian variety over $\aa_g$, and let $\bb = (\id,\lambda)^*\pp$ on $\xx_g$ be the line bundle on $\xx_g$ derived from the universal Poincar\'e bundle, as in Section~\ref{siegel-jacobi}. Recall that the restriction of $\bb$ to a fiber of $q$ represents twice the given principal polarization.  Following \cite[Proposition~2.8]{bghdj} or \cite[Proposition~6.4]{hrar} the line bundle $\bb$ carries a unique smooth hermitian metric  which restricts to the trivial metric along the given trivialisation of $\bb$ along the zero section, and has translation-invariant curvature form in each fiber. 

The canonical metric on $\bb$  can be made explicit as follows. First of all we put
\[ \|P\|(z,\tau) = \exp(-\pi\, {}^t (\Im z) \cdot (\Im  \tau)^{-1} \cdot (\Im z)) \, , \quad (z,\tau) \in \cc^g \times \hh_g  \, . \]
Now for the moment let's fix $\tau \in \hh_g$. Let $\ell \in \zz_{\geq 0}$ and let $f \in V_{\ell,\tau}$ be a theta function of order~$\ell$ with respect to $L_\tau$ (cf.\ Section~\ref{theta_fcns}). The defining functional equation (\ref{func_eqn}) implies  that the real-valued function
\begin{equation} \label{def_norm_B}
 \|f\|_0 (z) = \|P \|(z,\tau)^\ell \cdot |f(z)| \, , \quad z \in \cc^g 
 \end{equation}
is invariant under translations by the lattice $L_\tau$. It follows that $\|f\|_0$ descends to give a function on the abelian variety $A_\tau=\cc^g/L_\tau$. Write $D=\divisor f$. The map $\|f \|_0$ defines a smooth hermitian metric $\| \cdot \|_0$ on $\oo_{A_\tau}(D)$ by setting $\|1\|_0 = \|f\|_0$ where $1$ is the canonical global section~$1$ of $\oo_{A_\tau}(D)$. 

Note that $\|f\|_0(0)= |f|(0)$.  It can be shown that  (\ref{def_norm_B}) with $\ell=2$  gives a globally defined smooth hermitian metric $\|\cdot\|_0$ on $\bb$ which restricts to the trivial metric along the given trivialisation along the zero section, and which has translation-invariant curvature form in each fiber. We have thus constructed the required metric on $\bb$ explicitly.

Denote by $2 \,\omega_0$  the curvature form of the metric $\|\cdot\|_0$.
By  (\ref{def_norm_B}) we can write, locally in coordinates $(z,\tau)$ coming from $\cc^g \times \hh_g$,
\begin{equation}  \label{explicit_omega_0}
  \omega_0 = \omega_0(z, \tau) = \frac{\deldelbar}{\pi i} \log \|P\|(z,\tau) =  i \, \deldelbar \, {}^t (\Im z)\cdot (\Im  \tau)^{-1} \cdot(\Im z) \, ,
\end{equation} 
see also \cite[Theorem~2.10]{bghdj}.

The following proposition lists a number of useful properties of the $2$-form $\omega_0$. 
\begin{prop} \label{vanishing_omega_power} The $2$-form $\omega_0$ vanishes along the zero section. On every fiber of the universal abelian variety $\xx_g \to \aa_g$ the $2g$-form $ \omega_0^g$ restricts to the Haar measure with total mass equal to $g!$. The $(2g+2)$-form $ \omega_0^{g+1}  $ vanishes identically on $\xx_g$. 
\end{prop}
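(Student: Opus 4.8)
The plan is to work throughout with the explicit expression (\ref{explicit_omega_0}) for $\omega_0$ in the coordinates $(z,\tau)$ coming from $\cc^g \times \hh_g$, and to reduce all three assertions to elementary statements about the differential form $i\,\deldelbar\,{}^t(\Im z)\cdot(\Im\tau)^{-1}\cdot(\Im z)$ restricted to a single fiber $\tau = \text{const}$. First I would handle the vanishing along the zero section: from (\ref{def_norm_B}) with $\ell=2$ the metric $\|\cdot\|_0$ restricts to the \emph{trivial} metric along the given trivialisation of $\bb$ along the zero section, so the curvature form $2\,\omega_0$ — being $\deldelbar$ of the log of a constant function — vanishes along that section, which is exactly the first claim. (Alternatively one reads it off directly from $\log\|P\|(z,\tau) = -\pi\,{}^t(\Im z)(\Im\tau)^{-1}(\Im z)$, whose Hessian in the relevant sense vanishes at $z=0$.)

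For the second assertion I would fix $\tau \in \hh_g$, write $z = x + \tau' y$ in terms of real coordinates adapted to the lattice — more concretely, use $\Im z = (\Im\tau)\,y$ for real $y \in \rr^g$, so that $y$ descends to a real coordinate system on $A_\tau = \cc^g/L_\tau$ with the $2g$ real coordinates being the "$x$" and "$y$" parts. A direct computation gives $\omega_0|_{A_\tau} = i\,\deldelbar\big({}^t(\Im z)(\Im\tau)^{-1}(\Im z)\big)$, and since ${}^t(\Im z)(\Im\tau)^{-1}(\Im z) = {}^t y (\Im\tau) y$, one finds after the standard manipulation that $\omega_0|_{A_\tau}$ equals, up to the normalising constant, $\sum_{j} \d x_j \wedge \d y_j$ — that is, $\omega_0$ restricts on each fiber to (a constant multiple of) the standard symplectic/Kähler form of the principal polarization scaled by $2$, consistently with the remark after Section~\ref{siegel-jacobi} that $\bb$ represents \emph{twice} the polarization. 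Taking the $g$-th exterior power then yields $g!$ times $\d x_1 \wedge \d y_1 \wedge \cdots \wedge \d x_g \wedge \d y_g$, which is the Haar probability measure on $A_\tau$ up to the factor $g!$; here one must be careful to track the factor of $2$ coming from $\ell = 2$ against the factor $g!$ from expanding the power, and to check that the orientation and the normalisation "unit volume" match, so that the total mass comes out to be exactly $g!$. Translation-invariance of $\omega_0$ in the fiber (already part of the defining property of the metric) guarantees the restriction is genuinely the Haar measure and not merely some form with the right integral.

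For the third assertion, the key observation is that $\omega_0$, written in the coordinates $(z,\tau)$, is $\deldelbar$ applied to the function ${}^t(\Im z)(\Im\tau)^{-1}(\Im z)$, which depends on $z$ only through its \emph{imaginary part} $\Im z$ — equivalently, it is a function of the $g$ real variables $y$ only (and of $\tau$). Hence $\omega_0$, as a $(1,1)$-form on $\cc^g\times\hh_g$, when expanded in the $\d z_i, \d\bar z_i, \d\tau_{jk}, \d\bar\tau_{jk}$, has its "fiber part" supported on the $g$-dimensional space spanned by the $\d y_j = \tfrac{1}{2i}(\d z_j - \d\bar z_j)$; more precisely the $\d\bar z$-components and $\d z$-components of $\omega_0$ are not independent but are governed by these $g$ real one-forms together with the $\d\tau$, $\d\bar\tau$ directions. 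Since $\mm_g^{\mathrm{ct}}$ — sorry, since $\hh_g$ contributes the base directions, the clean way to argue is: $\omega_0^{g+1}$ restricted to any fiber is $(\omega_0|_{A_\tau})^{g+1}$, the $(g+1)$-st power of a $2$-form on the $2g$-real-dimensional torus $A_\tau$, hence vanishes for dimension reasons; and for the full form on $\xx_g$ one uses that $\omega_0$ is the pullback of a form whose restriction to each fiber already exhausts the available "vertical" directions — formally, $\omega_0 = i\,\deldelbar F$ with $F = F(y,\tau)$ and $\deldelbar F$ lies in the sub-bundle $\Omega^1_{\mathrm{fib}}\oplus(\text{base})$ where the fiber contribution is an $\rr$-form in the $g$ coordinates $y$, so any product of $g+1$ copies already forces a repeated $\d y_j$. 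I would phrase this last point via the factorisation $\omega_0 = \alpha + \beta$ where $\alpha$ involves only $\d y_j$'s (at most $g$ of them, pairwise) and $\beta$ the base; then $\omega_0^{g+1}$ expands into terms each containing at least a $g$-fold wedge in the $y$-directions times at least one more $\d y_j$, which repeats, giving $0$. The main obstacle I anticipate is making this vanishing argument precise globally on $\xx_g$ rather than fiberwise — i.e.\ correctly identifying the rank of the "vertical" part of $\omega_0$ and separating it cleanly from the base directions — and keeping the numerical normalisation in part two ($2$ from $\ell=2$, $g!$ from the power, unit-volume convention) bookkept without error.
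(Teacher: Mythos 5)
Your treatments of the first two assertions are sound. For the vanishing along the zero section, the correct form of your argument is the pullback one: the curvature of $e^*\bb$ (where $e$ is the zero section) is $e^*(2\,\omega_0)$ and equals $\deldelbar\log$ of the constant norm of the pulled-back trivializing section, hence vanishes; this is exactly what the paper means by ``immediate from the defining properties''. (Be careful with your parenthetical alternative: the full complex Hessian of $\log\|P\|$ does \emph{not} vanish at $z=0$ --- its $\d z_j\wedge\d\bar z_k$ block is the constant matrix $(\Im\tau)^{-1}$ --- only its pullback along the zero section does.) For the second assertion you compute the restriction to a fiber explicitly in real coordinates adapted to the lattice and get $\sum_j \d x_j\wedge\d y_j$, whence mass $g!$; the paper instead argues cohomologically (translation-invariance forces a multiple of Haar, and $\omega_0$ represents the principal polarization once, whose top self-intersection is $g!$). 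Both routes are valid; yours requires the factor-of-$2$ and unit-volume bookkeeping you flag, the paper's avoids it entirely.

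The third assertion is where your argument, as written, has a genuine gap, and it is precisely the obstacle you anticipate. The decomposition $\omega_0=\alpha+\beta$ with $\alpha=\sum_j\d y_j\wedge\gamma_j$ and $\beta$ a pure base form does \emph{not} force $\omega_0^{g+1}=0$: in the binomial expansion the cross term $\binom{g+1}{g}\,\alpha^g\wedge\beta$ contains only $g$ factors of $\d y_j$ (no repetition) wedged with a base $2$-form, and since $\hh_g$ has real dimension $g(g+1)$ there is ample room for such a $(2g+2)$-form to be nonzero. Your claim that ``each term contains a $g$-fold wedge in the $y$-directions times at least one more $\d y_j$'' is only true if \emph{every} summand of $\omega_0$ carries a $\d y_j$, which your own splitting denies by allowing a nonzero $\beta$. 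What actually saves the argument --- and is the paper's device --- is to exhibit $\omega_0$ as a sum of exactly $g$ \emph{decomposable} $2$-forms: substituting $u=(\sqrt{\Im\tau})^{-1}z$ (a real-analytic, non-holomorphic change of fiber coordinate) one gets $\omega_0=\frac{i}{2}\sum_{j=1}^{g}\d u_j\wedge\d\overline{u_j}$, so that in $\omega_0^{g+1}$ the pigeonhole principle forces some index $j$ to repeat and $\d u_j\wedge\d\overline{u_j}\wedge\d u_j\wedge\d\overline{u_j}=0$. (Equivalently, one can check directly from $F={}^t(\Im z)(\Im\tau)^{-1}(\Im z)$ that $\deldelbar F=\sum_j \partial y_j\wedge\eta_j$ for suitable $(0,1)$-forms $\eta_j$, which is again a sum of $g$ decomposable pieces; note the base differentials $\d\tau,\d\bar\tau$ are absorbed \emph{inside} the factors $\partial y_j$ and $\eta_j$ rather than split off into a separate $\beta$.) Your ``key observation'' that $F$ depends on the fiber coordinate only through the $g$ real variables $\Im z$ is indeed the right starting point, but it must be converted into a rank bound of this decomposable-sum type, not into a fiber/base splitting.
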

\begin{proof} The first statement is immediate from the defining properties of $\|\cdot\|_0$. Next, by definition the restriction of $\omega_0$ to a given fiber is translation-invariant. It follows that the form $\omega_0^g$ restricts to a multiple of the Haar measure. As the restriction of $\omega_0$ to a  fiber is a de Rham representative of the given principal polarization, and as the degree of a divisor representing a principal polarization is equal to $g!$, we find the second statement. Finally, let $\sqrt{\Im  \tau}$ denote the unique positive definite square root of $\Im  \tau$. Then $\sqrt{\Im \tau}$ defines a real analytic function on $\hh_g$. From (\ref{explicit_omega_0}) we see that the coordinate transformation $u = (\sqrt{\Im  \tau})^{-1}z$ allows us to write
$ \omega_0(u, \tau)=  i \, \deldelbar \, {}^t (\Im u)  \cdot (\Im u) = \frac{i}{2} \sum_{j=1}^g \d  u_j \wedge \d  \overline{u_j}$.
In particular we can make $\omega_0$ to depend on only $g$ holomorphic coordinates. This gives the vanishing of $\omega_0^{g+1}$. 
\end{proof}

\subsection{Normalized Siegel-Jacobi forms} \label{normalization} 
Let $k, m \in \zz$. From the metric $\|\cdot\|_0$ on $\bb$ and the Hodge metric $\|\cdot\|_{\Hdg}$ on $\ll$ we obtain canonically induced hermitian metrics $\|\cdot\|$ on the line bundle $\ll^{\otimes k} \otimes \bb^{\otimes m}$ on $\xx_g$. 
In particular, if $f(z,\tau)$ is a Siegel-Jacobi form of weight~$k$ and index~$m$ with respect to some finite index subgroup $\varGamma$  of $\Sp(2g,\zz)$ (cf.\ Section~\ref{modularity}) we obtain a normalized version $\|f\|(z,\tau)$ of $f(z,\tau)$ by taking the norm of $f(z,\tau)$ in the metric $\|\cdot\|$. Explicitly we have
\[ \begin{split} \|f\|(z,\tau) & =  \|f\|_0(z,\tau) \cdot  \|   \d z_1 \wedge \ldots \wedge \d z_g \|^k_\Hdg(\tau) \\ 
& =   (\det \Im \tau)^{k/2}\cdot  \|f\|_0(z,\tau)  \, , \quad z \in \cc^g \, , \tau \in \HH_g \, ,
\end{split} \]
where $\|f\|_0(z,\tau)$ is defined as in (\ref{def_norm_B}) with $\ell=2m$. Here we have used (\ref{norm_frame}).
We have that $\|f\|(z,\tau)$ is $(\zz^g \times \zz^g) \rtimes \varGamma$-invariant, and we obtain a well-defined real valued function $\|f\|$ on $\xx_g^\varGamma$. 

\subsection{Normalized theta functions with characteristics}
A particular case is furnished by the theta functions with characteristics (cf.\ Section~\ref{theta_with_char}).
Let $\a \in \frac{1}{2}\zz^g \times \frac{1}{2}\zz^g$ be any characteristic in degree~$g$. From Section~\ref{siegel-jacobi} we recall that $\theta_\a^2(z,\tau)$ is a Siegel-Jacobi form of weight one and index one. We define $\|\theta_\a\| (z,\tau)$ to be the square root of the normalized theta function $ \|\theta_\a^2\|(z,\tau)$. From (\ref{theta_translate}) it is straightforward to verify that one has an equality
\begin{equation} \label{invariant_translate}
 \|\theta_\a\|(z,\tau) = \|\theta\|(z+\tau \a'+\a'',\tau) \, , \quad z \in \cc^g \, , 
\end{equation}
which shows that  the various functions $\|\theta_\a\|([z],\tau)$ on $A_\tau$ are translates of one another by two-torsion points. Here $\theta$ is the theta function with zero characteristic in degree~$g$.

Fix again a matrix $\tau \in \hh_g$. One may view $\|\theta\|$ as giving a smooth hermitian metric on $\oo_{A_\tau}(\varTheta_\tau)$ by setting $\|1\|=\|\theta\|$ where $1$ is the canonical global section of $\oo_{A_\tau}(\varTheta_\tau)$. 
Let $\mu_\tau$ denote the Haar measure on $A_\tau$ normalized to give $A_\tau$ total mass equal to one.  Following \cite[p.~401]{fa} or \cite[Section~2.1]{wilms}, the metric $\|\cdot\|$ on $\oo_{A_\tau}(\varTheta_\tau)$ is characterized among all smooth metrics on $\oo_{A_\tau}(\varTheta_\tau)$ by the following two properties: (a) the associated curvature form is translation-invariant, and (b) the function $\|1\|^2$ integrates to $2^{-g/2}$ against~$\mu_\tau$.

\begin{definition} Let $\tau \in \hh_g$ and let $\a \in \frac{1}{2}\zz^g \times \frac{1}{2}\zz^g$ be a characteristic in degree~$g$. Following \cite{bost_cras, bmmb} we define 
\begin{equation} \label{defH}
 \log \|H \|(A_\tau) = \int_{A_\tau} \log \|\theta_\a \| \, \mu_\tau \, , 
\end{equation} 
where as above $\mu_\tau$ is the Haar measure on $A_\tau$ normalized to give $A_\tau$ total mass equal to one.
As $\theta_a$ is not identically zero we see that $\log \|H\|(A_\tau)$ is well-defined as a real number.
By (\ref{invariant_translate})  the real number $\log \|H\|(A_\tau)$ is independent of the choice of $\a$.
As it turns out, the group $\Sp(2g,\zz)$ acts on the set of functions $\|\theta_\a\|$. We deduce that $\log \|H\|$ is an invariant of the isomorphism class of $A_\tau$ as a principally polarized complex abelian variety, and hence we have a natural induced real-valued map $\log\|H\| \colon \aa_g \to \rr$. The invariant $\log\|H\| $ coincides with the invariant called $H$ in \cite[Section~2.1]{wilms}.
\end{definition}
 
\section{Arakelov geometry on moduli spaces of curves} \label{arakelov}

 In our proof of Theorem~\ref{main}  we will need a few identities involving canonical $2$-forms on the moduli space of compact Riemann surfaces, and related moduli spaces, in the spirit of \cites{djtorus, kaw, kawhandbook, wilms}. The purpose of this section is to display these identities, referring to {\em loc.\ cit}.\ for further details and proofs.
 
Let $g \geq 2$ be an integer. As before we denote by $\mm_g$ the moduli space of compact Riemann surfaces of genus ~$g$. Let $p^1 \colon \CC_g \to \mm_g$ be the universal Riemann surface over $\mm_g$, and let $p^2 \colon \CC_g^2 \to \mm_g$ be the universal self-product of a Riemann surface over $\mm_g$. Thus $\CC_g$ parametrizes pairs $(X,x)$ where $X \in \mm_g$ and $x \in X$, and $\CC_g^2$ parametrizes triples $(X,x,y)$ where $X \in \mm_g$ and $x, y \in X$. Let $T_{\CC_g/\mm_g}$ denote the relative tangent bundle of $p \colon \CC_g \to \mm_g$, let $\varDelta$ denote the diagonal divisor of $\CC_g^2$, and let $\oo(\varDelta)$ denote the associated line bundle on $\CC_g^2$. 

We obtain a canonical smooth hermitian metric $\|\cdot\|$ on $\oo(\varDelta)$ by setting $\log \|1\|(X,x,y) = g_X(x,y)$ for $x \neq y$ with $g_X$ the {\em Arakelov-Green's function}  \cite{ar, fa} on $X$. Here $1$ denotes the canonical holomorphic section of $\oo(\varDelta)$. The Poincar\'e residue gives a canonical isomorphism of line bundles $T_{\CC_g/\mm_g} \isom \varDelta^* \oo(\varDelta)$ on $\CC_g$. By this isomorphism the metric $\|\cdot\|$ induces a canonical residual metric $\|\cdot\|_\Ar$ on $T_{\CC_g/\mm_g}$, called the {\em Arakelov metric}.
 
 We let $e^A$ denote the curvature form of the smooth hermitian line bundle $T_{\CC_g/\mm_g}$ over $\CC_g$, and we let $h$ denote the curvature form of the smooth hermitian line bundle $\oo(\varDelta)$ on $\CC_g^2$. We set $e_1^A = \int_{p^1} (e^A)^2$. This is a $2$-form on $\mm_g$.
 We denote by $p_i \colon \CC_g^2 \to \CC_g$ for $i=1, 2$ the projections on the first and second coordinate, respectively. By \cite[Proposition~6.2]{djtorus} we have for $i=1, 2$ an equality
\begin{equation} \label{self_inters}
 \int_{p_i} h^2 = e^A  
\end{equation}
of $2$-forms on $\CC_g$. From (\ref{self_inters}) and the projection formula for fiber integration one readily derives for each $i,j =1,2$ the identities
\begin{equation} \label{forms-I}
 \int_{p^2} h^2 \, p_i^*e^A = e_1^A \, , \quad \int_{p^2} h \, p_i^*e^A \, p_j^*e^A = e_1^A \, , \quad  \int_{p^2} (p_i^* e^A)^3 =0 \, , 
\end{equation}
and for $i \neq j$,
\begin{equation} \label{forms-II}
  \int_{p^2} p_i^*e^A \, (p_j^*e^A)^2 = (2-2g)\, e_1^A \, .  
\end{equation}
Write $e_1^K$ for the $2$-form $\int_{p^2} h^3$ on $\mm_g$. Let $\varphi \colon \mm_g \to \rr$ be the Kawazumi-Zhang invariant. By \cite[Proposition~2.5.3]{zhgs} the function $\varphi$ can be obtained as the fiber integral
\begin{equation}
 \varphi = \int_{p^2} \log \|1\| \, h^2 \, , 
\end{equation}
where we recall that $\log \|1\|(X,x,y) = g_X(x,y)$ is the Arakelov-Green's function. This expression for $\varphi$ readily gives the identity
\begin{equation} \label{variation_varphi}
\frac{\deldelbar}{\pi i} \varphi = e_1^K- e_1^A 
\end{equation}
of $2$-forms on $\mm_g$, cf.\ \cite[Proposition~5.3]{djtorus}.

As before let $q \colon \xx_g \to \aa_g$ denote the universal abelian variety over $\aa_g$. Denote by $2 \,\omega_0$  the curvature form of the canonical metric $\|\cdot\|_0$ on the line bundle $\bb$ on $\xx_g$ as in Section~\ref{biext_norm}.  Let $\omega_\Hdg$ be the curvature form of the Hodge metric on the determinant of the Hodge bundle $\ll$ over $\aa_g$ as in Section~\ref{hodge_metric}. Let $p \colon \jj_g \to \mm_g$ denote the universal jacobian over $\mm_g$. 
The forms $\omega_0$ resp.\ $\omega_{\Hdg}$ pull back along the Torelli map to $\jj_g$ resp.\ $\mm_g$. We denote the resulting forms by the same symbols. 

Let $\delta \colon \CC_g^2 \to \jj_g$ be the difference map that sends a triple $(X,x,y)$ to the pair $(X,[x-y])$, where $[x-y]$ is the class of the divisor $x-y$ in the jacobian of $X$.
By identity (K3) in \cite[Theorem~1.4]{djtorus} we have
\begin{equation} \label{delta_upper_star}  2 \,  \delta^* \omega_0 = 2\,h - p_1^*e^A - p_2^*e^A  \, . 
\end{equation}
Via the Torelli map we can view the function $\log \|H\|$ discussed at the end of Section~\ref{biext_norm} as a function on $\mm_g$. In \cite[eqn. (5.19)]{wilms} we find  the equality
\begin{equation} \label{variation_theta_integral}
\frac{\deldelbar}{\pi i} \log \|H\| = \frac{1}{2}  \, \omega_{\Hdg} - \frac{1}{8} \, e_1^A + \frac{1}{12} \, e_1^K 
\end{equation}
of $2$-forms on $\mm_g$.

\section{Proof of Theorem~\ref{main}} \label{sec:normalized}

In this section we specialize to the case where the genus is three. 

\subsection{Normalized Frobenius theta function} Denote by $p \colon \jj_3 \to \mm_3$ the universal jacobian over $\mm_3$. As we have seen in Theorem~\ref{level_one} Frobenius' theta function $\phi(z,\tau)$  determines a holomorphic section of the line bundle $\ll^{\otimes 8} \otimes \bb$ on $\jj_3$. Its normalized version $\|\phi\| = \|\phi\|(z,\tau)$ (cf.\ Section~\ref{normalization}) is $(\zz^3 \times \zz^3) \rtimes\Sp(6,\zz)$-invariant and hence descends to give a well-defined real valued function on $\jj_3$. The zero locus of $\|\phi\| $ is the universal difference surface $F$.
\begin{definition} Let $\tau \in \hh_3$.
Assume that $A_\tau$ is the jacobian of a compact Riemann surface $X$ of genus three, and let $\mu_\tau$ denote the Haar measure on $A_\tau$ giving $A_\tau$ unit volume. We set
\begin{equation} \label{defK}
 \log \|K\|(X)= \int_{A_\tau} \log \|\phi\| \, \mu_\tau \, . 
\end{equation}
As $\|\phi\|$ does not vanish identically on $A_\tau$ this indeed defines a real-valued invariant of the Riemann surface $X$. We may view $\log \|K\|$ as a function on $\mm_3$. 
\end{definition}

We have the following counterpart to (\ref{variation_theta_integral}) for the invariant $\log \|K\|$ in genus three.
\begin{thm}  Let $\log \|K\|$ be the invariant given in (\ref{defK}). We have an equality
\begin{equation} \label{variation_phi_integral}
\frac{\deldelbar}{\pi i} \log \|K\|=  8 \, \omega_{\Hdg} - \frac{1}{2}\, e_1^A - \frac{1}{6} \, e_1^K   
\end{equation}
of $2$-forms on $\mm_3$.
\end{thm}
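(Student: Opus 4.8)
The statement asserts a formula for the curvature form (via $\frac{\partial\bar\partial}{\pi i}$) of the invariant $\log\|K\|$, which is the fiberwise integral of $\log\|\phi\|$ over the universal jacobian. The natural strategy is to differentiate under the integral sign and invoke a Poincar\'e--Lelong type computation, exactly as was done for $\log\|H\|$ in the derivation of (\ref{variation_theta_integral}). Concretely, I would write $\log\|K\| = \int_{p}\log\|\phi\|\,\mu$, where $p\colon \jj_3\to\mm_3$ is the universal jacobian and $\mu$ is the fiberwise Haar probability measure. The key point is that $\|\phi\|$ is the norm, in the metrized line bundle $\ll^{\otimes 8}\otimes\bb$ on $\jj_3$, of the section $\phi$ whose divisor is the universal difference surface $F$ (Theorem~\ref{level_one}). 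Hence, on $\jj_3$ away from $F$, the Poincar\'e--Lelong equation gives $\frac{\partial\bar\partial}{\pi i}\log\|\phi\|^2 = c_1(\ll^{\otimes 8}\otimes\bb, \|\cdot\|) = 8\,\omega_{\Hdg} + 2\,\omega_0$ (using that the curvature of $\bb$ is $2\,\omega_0$), while globally $\frac{\partial\bar\partial}{\pi i}\log\|\phi\| = 4\,\omega_{\Hdg} + \omega_0 - [F]$ as currents on $\jj_3$.

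\textbf{Key steps.} First I would justify commuting $\frac{\partial\bar\partial}{\pi i}$ with the fiber integral $\int_p$; since $\|\phi\|$ vanishes only along the codimension-one cycle $F$ and its logarithm is fiberwise integrable (as $F$ restricts to a divisor, not the whole fiber), this is a standard application of the theory of fiber integration of currents. This yields
\begin{equation*}
\frac{\partial\bar\partial}{\pi i}\log\|K\| = \int_p\left( 4\,\omega_{\Hdg} + \omega_0 - [F]\right).
\end{equation*}
Second, I would evaluate the three terms. The term $\int_p 4\,\omega_{\Hdg}$: since $\omega_{\Hdg}$ is pulled back from $\mm_3$, fiber integration against the probability measure gives $4\,\omega_{\Hdg}$ — but one must be careful about the normalization, since we integrate a $2$-form over $3$-dimensional fibers, so in fact $\int_p\omega_{\Hdg}\wedge(\text{nothing}) $ only makes sense after pairing with the fiber volume; the correct statement is $\int_p \omega_{\Hdg}\wedge\mu$-type contributions need the full top-degree form. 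So more precisely, fiber integration of a pullback form $p^*\alpha$ wedged with nothing is zero for degree reasons; one genuinely integrates $\log\|\phi\|$ against $\mu$ first and then differentiates, so the right framework is: $\frac{\partial\bar\partial}{\pi i}\int_p\log\|\phi\|\,\mu = \int_p\left(\frac{\partial\bar\partial}{\pi i}\log\|\phi\|\right)\wedge\mu'$ where one tracks how $\mu$ itself varies — but $\mu$ is translation-invariant and $\omega_0^3/3!$ represents it, so cleanly $\log\|K\| = \frac{1}{3!}\int_p\log\|\phi\|\,\omega_0^3$ and then $\frac{\partial\bar\partial}{\pi i}\log\|K\| = \frac{1}{3!}\int_p\left(4\,\omega_{\Hdg}+\omega_0 - [F]\right)\wedge\omega_0^3$. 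Since $\omega_0^4 = 0$ on $\jj_3$ (Proposition~\ref{vanishing_omega_power}, with $g=3$), the $\omega_0\wedge\omega_0^3$ term drops out, and $\int_p\omega_{\Hdg}\wedge\omega_0^3 = \omega_{\Hdg}\cdot\frac{1}{3!}\int_p\omega_0^3\cdot 3! = 3!\,\omega_{\Hdg}$ since $\int_p\omega_0^3 = 3! = 6$ on each fiber; thus the first term contributes $4\,\omega_{\Hdg}$.

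\textbf{The difference-surface term.} The remaining and main computation is $-\frac{1}{3!}\int_p [F]\wedge\omega_0^3$. Here I would push the current $[F]$ forward via the difference map $\delta\colon\CC_3^2\to\jj_3$: since $F = \delta_*[\CC_3^2]$ and $\delta$ is generically an isomorphism onto its image, $\int_p [F]\wedge\omega_0^3 = \int_{p^2}\delta^*\omega_0^3$, where $p^2\colon\CC_3^2\to\mm_3$. Now I substitute the identity (\ref{delta_upper_star}), $2\,\delta^*\omega_0 = 2h - p_1^*e^A - p_2^*e^A$, cube it, and expand using the fiber-integration identities (\ref{forms-I}), (\ref{forms-II}) together with $\int_{p^2}h^3 = e_1^K$ and $\int_{p^2}h^2 p_i^* e^A = e_1^A$. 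Expanding $(2h - p_1^*e^A - p_2^*e^A)^3$ and collecting terms using these identities (and $\int_{p^2}(p_i^*e^A)^3 = 0$) should produce a combination of $e_1^K$ and $e_1^A$; matching the claimed answer $8\,\omega_{\Hdg} - \frac12 e_1^A - \frac16 e_1^K$ forces $-\frac{1}{3!}\int_{p^2}\delta^*\omega_0^3 = 4\,\omega_{\Hdg} - \frac12 e_1^A - \frac16 e_1^K$; since no $\omega_{\Hdg}$ can appear from the $e^A, h$ expansion, I expect instead that the Hodge contribution is entirely the $4\,\omega_{\Hdg}$ from the first term doubled — so I should double-check the normalization of $\bb$'s metric (curvature $2\,\omega_0$ means $c_1 = \omega_0$, and of $\ll^{\otimes 8}$ gives $8\,\omega_{\Hdg}$, so $\frac{\partial\bar\partial}{\pi i}\log\|\phi\|^2$ pairs to $8\,\omega_{\Hdg}$ and hence $\frac{\partial\bar\partial}{\pi i}\log\|\phi\| = 4\,\omega_{\Hdg}+\omega_0-[F]$, consistent with what I wrote). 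The main obstacle is the bookkeeping in expanding $\frac{1}{3!}(2h-p_1^*e^A-p_2^*e^A)^3$ and reducing via (\ref{forms-I})--(\ref{forms-II}): one must correctly handle the cross terms $h\,p_i^*e^A\,p_j^*e^A$ for $i=j$ versus $i\neq j$, and the term $h^2\,p_i^*e^A$, to land exactly on $-\frac12 e_1^A - \frac16 e_1^K$. A secondary subtlety is justifying the interchange of $\partial\bar\partial$ and $\int_p$ rigorously given the singularity of $\log\|\phi\|$ along $F$ and the behavior near the decomposable locus, which by Theorem~\ref{vanishing_mult} is mild (vanishing multiplicity two, codimension two in $\aa_3$), so it does not affect the current computation on $\mm_3$.
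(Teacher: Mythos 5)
Your overall strategy is the one the paper uses: write $\log\|K\|=\frac{1}{3!}\int_p\log\|\phi\|\,\omega_0^3$, differentiate under the fiber integral, apply Poincar\'e--Lelong to the section $\phi$ of $\ll^{\otimes 8}\otimes\bb$, kill the $\omega_0\wedge\omega_0^3$ term via $\omega_0^4=0$, and push the difference-surface term to $\CC_3^2$ via $\delta$ and the identity $2\,\delta^*\omega_0=2h-p_1^*e^A-p_2^*e^A$. However, there is a genuine error in your Poincar\'e--Lelong step, and you notice the resulting discrepancy without resolving it. In this paper's normalization the first Chern form of a metrized line bundle is $\frac{\deldelbar}{\pi i}\log\|s\|$ for a nonvanishing holomorphic section $s$ (\emph{not} $\log\|s\|^2$): this is exactly why $\omega_0=\frac{\deldelbar}{\pi i}\log\|P\|$ in (\ref{explicit_omega_0}) while the curvature of $\|\cdot\|_0$ on $\bb$ is $2\,\omega_0$, since $\|f\|_0=\|P\|^2|f|$ for a section of order two. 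Hence the correct current equation is
\begin{equation*}
\frac{\deldelbar}{\pi i}\log\|\phi\| \;=\; 8\,\omega_{\Hdg}+2\,\omega_0-\delta_F ,
\end{equation*}
whereas your $4\,\omega_{\Hdg}+\omega_0-[F]$ halves the smooth part but not the singular part, which is internally inconsistent as well as off by a factor of two. With the correct equation the Hodge term contributes $\frac{1}{6}\int_p 8\,\omega_{\Hdg}\,\omega_0^3=8\,\omega_{\Hdg}$, which is exactly the coefficient you observed you were missing; your closing remark that the computation is ``consistent with what I wrote'' is therefore where the argument breaks.

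A secondary point: you defer the central bookkeeping. One must actually expand
\begin{equation*}
\int_{p|_F}\omega_0^3=\frac{1}{8}\int_{p^2}\bigl(2h-p_1^*e^A-p_2^*e^A\bigr)^3
\end{equation*}
and reduce via (\ref{forms-I})--(\ref{forms-II}); doing so gives $\frac{1}{8}\bigl(8e_1^K-24e_1^A+24e_1^A+24e_1^A\bigr)=e_1^K+3e_1^A$ (the term $-\int_{p^2}(p_1^*e^A+p_2^*e^A)^3$ contributes $+24e_1^A$ because $\int_{p^2}p_i^*e^A(p_j^*e^A)^2=(2-2g)e_1^A=-4e_1^A$ for $i\neq j$ and the pure cubes vanish). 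Multiplying by $-\frac{1}{6}$ yields $-\frac{1}{2}e_1^A-\frac{1}{6}e_1^K$, completing the proof. Your sign and normalization handling of this term was correct in outline, but the verification needs to be carried out rather than asserted.
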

\begin{proof}  Let $F$ be the universal difference surface over $\mm_3$. We recall from Section~\ref{mod_prop} that $F$ is an effective relative Cartier divisor over $\mm_3$. Note that the curvature form of the smooth hermitian line bundle $\ll^{\otimes 8} \otimes \bb$ is equal to $2\, \omega_0 + 8 \, \omega_{\Hdg}$. The Poincar\'e-Lelong formula therefore gives an equality of currents   
\[  \frac{\deldelbar}{\pi i} \log  \|\phi\| = 2\, \omega_0 + 8 \, \omega_{\Hdg} - \delta_{F} \]
on $\jj_3$.  By Proposition~\ref{vanishing_omega_power}  the differential form $ \frac{1}{6} \omega_0^3 $ restricts to the Haar measure of unit mass on each fiber of $\jj_3$ over $\mm_3$, and the differential form $\omega_0^4$ vanishes. 
Starting from (\ref{defK}) we deduce, using that $\omega_0^3$ is $\partial$- and $\overline{\partial}$-closed, 
\begin{equation} \label{useful} \begin{split} \frac{\deldelbar}{\pi i} \log \|K\| 
& =  \frac{1}{6}  \frac{\deldelbar}{\pi i}  \int_{p} \log  \|\phi\| \, \omega_0^3   \\
& =  \frac{1}{6}  \int_{p} \frac{\deldelbar}{\pi i} \log  \|\phi\| \, \omega_0^3   \\
& = \frac{1}{6} \int_{p} \left(  2 \,\omega_0 + 8 \, \omega_{\Hdg} - \delta_F   \right) \, \omega_0^3 \\
& = 8 \,\omega_{\Hdg} - \frac{1}{6}\int_{p|_F} \omega_0^3 \, . 
\end{split} 
\end{equation} 
Let $p^2 \colon \CC_3^2 \to \mm_3$ denote the projection map. The identities in (\ref{forms-I}), (\ref{forms-II}) and (\ref{delta_upper_star}) and the fact that the difference map $\delta$ is generically an isomorphism onto its image allow us to compute
\begin{equation} \label{fiber_int_overZ} \begin{split} \int_{p|_F} \omega_0^3 & = \frac{1}{8}  \int_{p^2} \left( 2h - p_1^*e^A - p_2^*e^A \right)^3 \\
& = \frac{1}{8}  \int_{p^2} \left( 8h^3 -12h^2(p_1^*e^A +p_2^*e^A) + 6h(p_1^*e^A+p_2^*e^A)^2 - (p_1^*e^A+p_2^*e^A )^3 \right) \\
& =  3\,e_1^A + e_1^K \, .
\end{split} 
\end{equation}
Combining (\ref{useful}) and (\ref{fiber_int_overZ}) we obtain (\ref{variation_phi_integral}).
\end{proof}

\subsection{Proof of the main result} \label{proof_main}

We can now prove Theorem \ref{main}.
By Wilms' identity (\ref{wilms})  it suffices to show (\ref{explicit_phi}).  First of all, a combination of (\ref{variation_varphi}),  (\ref{variation_theta_integral}) and (\ref{variation_phi_integral}) yields that $\deldelbar$ of left and right hand side of (\ref{explicit_phi}) are equal as $2$-forms on $\mm_3$.  As every pluriharmonic function on $\mm_3$ is constant, cf.\ the proof of \cite[Theorem~5.4.1]{wilms}, we will be done once we show that  formula (\ref{explicit_phi}) is correct for $X$ a hyperelliptic Riemann surface of genus three.
 
Let  $\tau \in \hh_3$ and assume that $A_\tau$ is isomorphic as a principally polarized abelian threefold to the jacobian $J$ of a hyperelliptic Riemann surface $X$.  Let $\xi(\tau)$ be given by (\ref{defxi}) and set $ \|\xi\|( \tau) = (\det \Im  \tau)^{70} |\xi( \tau)| $. By \cite[Corollary~4.10]{wilms} we have
\begin{equation} \label{first_step}
\varphi(X) = - \frac{1}{30} \log \|\xi\|(\tau) + \frac{28}{3} \log \|H\|(X) + 8 \log 2 \, . 
\end{equation}
Let $k$ be the unique vanishing even characteristic associated to $\tau$ and consider $\psi(\tau)=\phi(z,\tau)/\theta_k(z,\tau)^2$  as in Section~\ref{hyp_case}. We set, for generic $z \in \cc^3$,
\begin{equation} \label{norm_psi} \begin{split} \|\psi\|(\tau) & = \|\phi\|(z,\tau)/\|\theta_k\|(z,\tau)^2 \\ 
& = (\det \Im  \tau)^{7/2} |\phi(z,\tau)/\theta_k(z,\tau)^2| \\ 
& = (\det \Im  \tau)^{7/2} |\psi( \tau)| \, . \end{split} 
\end{equation}
Combining Proposition~\ref{ups_and_xi}, (\ref{defH}) and (\ref{norm_psi}) we obtain that 
\begin{equation} \label{hyp_xi}  -\log \|\xi\|( \tau)  = -20 \log \|\psi\|(\tau) =  -20 \log\|K\|(X) + 40 \log \|H\|(X)  \, . 
\end{equation}
Combining (\ref{first_step}) and (\ref{hyp_xi})  we obtain (\ref{explicit_phi}) for the hyperelliptic Riemann surface~$X$.

\section{A formula of Bloch, Hain and Bost revisited} \label{bloch}

Let $X$ be a compact Riemann surface of positive genus. For $n \in \zz$ we denote by $\Pic^n X$ the moduli space of linear equivalence classes of divisors of degree~$n$ on $X$. In particular, the moduli space $\Pic^0 X$ identifies with the jacobian of $X$.

For $\alpha \in \Pic^1 X$ we write $X_\alpha$ for the image of $X$ in $\Pic^0 X$ under the Abel-Jacobi map $x \mapsto [x-\alpha]$. Further we write $\varSigma_\alpha $ for the {\em Ceresa cycle} $ X_\alpha - [-1]_*X_\alpha$ in $\Pic^0 X$. Let $\alpha, \beta \in \Pic^1 X$. If the genus of $X$ is equal to three, and if the supports of the Ceresa cycles $\varSigma_\alpha$ and $\varSigma_\beta$ in $\Pic^0 X$ are disjoint, one has a natural associated {\em archimedean height pairing} $\pair{\varSigma_\alpha,\varSigma_{\beta} }_\infty \in \rr$. 

Around thirty years ago, Bloch asked to compute the archimedean height pairing $\pair{\varSigma_\alpha,\varSigma_{\beta} }_\infty$ in the case where $4\alpha$ is a canonical divisor class. Soon after Bloch had asked his question he and Hain and independently also Bost were able to carry out this computation. The results are contained in the papers \cite{bostduke} and \cite{hain}, which were published simultaneously. The approaches in the two papers \cite{bostduke} and \cite{hain} are however rather different. The aim of this final section is to bridge the two approaches, by making reference to Theorem~\ref{explicit_beta} and to our study of Frobenius' theta function $\phi$ as a Siegel-Jacobi form (cf.\ Theorem~\ref{level_one}). As this section contains no new results, we will be brief at most points in our discussion and leave the details to the interested reader.

\subsection{Hain-Reed invariant} \label{hainreed_bis}

We start by briefly recalling how the Hain-Reed invariant $\beta$ is defined, referring to the papers \cites{hrar, hr, hain_normal, dj_normal} for details on the various constructions. 

Let $g \geq 3$ be an integer. The first homology $H$ of a compact Riemann surface $X$ of genus~ $g$ defines a natural variation of polarized Hodge structures $\mathbb{H}$ on $\mm_g$. The intersection form on $H$ gives rise to an embedding $\mathbb{H} \to \bigwedge^3 \mathbb{H}$, and it can be shown  that the cokernel $\mathbb{L} = \bigwedge^3 \mathbb{H}/\mathbb{H} $ is again naturally a variation of polarized Hodge structures on $\mm_g$. 

Let $\jj(\mathbb{L}) \to \mm_g$ be the family of Griffiths intermediate jacobians associated to the variation $\mathbb{L}$. Taking the Abel-Jacobi class of a Ceresa cycle in the jacobian of $X$ defines a normal function section $\nu \colon \mm_g \to \jj(\mathbb{L})$ of $\jj(\mathbb{L}) \to \mm_g$. Following \cite{hrar} we may now consider a holomorphic line bundle $\nn $ on $\mm_g$ obtained by pulling back, along the normal function section $\nu$, a standard line bundle $\hat{\pp}$ on $\jj(\mathbb{L})$ obtained from the Poincar\'e bundle associated to the torus fibration $\jj(\mathbb{L})$ and the given polarization of $\jj(\mathbb{L})$. 

Let $\ll$ denote the determinant of the Hodge bundle on $\mm_g$ (cf. Section~\ref{moduli_curves}). It follows from a result of Morita \cite[(5.8)]{mo}, see also \cite[Theorem~7]{hr}, that the line bundle $\nn=\nu^*\hat{\pp}$ on $\mm_g$ is isomorphic with $\ll^{\otimes (8g+4)}$. We call a {\em Hain-Reed isomorphism} any choice of isomorphism $\ll^{\otimes (8g+4)} \isom \nn$. As every invertible holomorphic function on $\mm_g$ is constant we see that a Hain-Reed isomorphism is unique up to multiplication by constants. 

As is explained in {\em loc.\ cit.}, the line bundle $\hat{\pp}$ carries a canonical smooth hermitian metric, and by pullback along $\nu$ this defines a natural smooth hermitian metric on $\nn$. On the other hand, we can equip the line bundle $\ll$ with its Hodge metric, cf.\ Section~\ref{hodge_metric}. Taking the logarithm of the norm, with respect to the two given metrics on $\ll$ and $\nn$, of any Hain-Reed isomorphism $\ll^{\otimes (8g+4)} \isom \nn$ yields a well-defined class $\beta \in C^0(\mm_g,\rr)/\rr$.
This class $\beta$ is called the {\em Hain-Reed invariant} in genus~$g$. 

We recall that Theorem~\ref{explicit_beta} expresses the invariant $\beta$ in the case $g=3$ in terms of the invariants $\log \|K\|$ and $\log\|H\|$. We will apply Theorem~\ref{explicit_beta} in Section~\ref{applying}. In Section~\ref{connection} we will see an explicit construction of a Hain-Reed isomorphism $\ll^{\otimes 28} \isom \nn$ over $\mm_3$ using Frobenius' theta function $\phi$.

\subsection{Ceresa cycles and their height pairing in genus three}

Let $X$ be a compact Riemann surface of genus three. Let $\varTheta \subset \Pic^2 X$ be the locus of effective divisor classes, and let $F_X \subset \Pic^0 X$ be the difference surface as in Section~\ref{difference_surface}. We recall that both $\varTheta$ and $F_X$ are effective Cartier divisors. 

Let $\alpha, \beta \in \Pic^1 X$ and assume that $4\alpha$ is canonical. Let $\varTheta_{2\alpha} \subset \Pic^0 X$ be the translate of $\varTheta  \subset \Pic^2 X$ by the semi-canonical divisor $2\alpha$. It is not hard to see that the supports of $\varSigma_\alpha$ and $\varSigma_{\beta} $ are disjoint whenever $\alpha+\beta \notin |\varTheta|$ and $\alpha-\beta \notin |F_X|$, or equivalently whenever $\alpha-\beta \notin |\varTheta_{2\alpha}| \cup |F_X|$. 

Following Bloch's original idea \cite{hain} we consider the moduli space $\widetilde{\mm}$ of pairs $(X,\alpha)$ where $X$ is a compact Riemann surface of genus three and $\alpha \in \Pic^1 X$ is a divisor class such that $4\alpha$ is canonical. 
Let $\widetilde{p} \colon \widetilde{\jj} \to \widetilde{\mm}$ denote the universal jacobian over $\widetilde{\mm}$. We may view $\widetilde{\jj}$ as the moduli space of triples $(X,\alpha,D)$ where $(X,\alpha) \in \widetilde{\mm}$ and  $D$ is an element of $\Pic^0 X$. We naturally have a universal symmetric theta divisor $\varTheta_{0}$ and a universal difference surface $F$ on $\widetilde{\jj}$, both of which are effective relative Cartier divisors.  

Let $\mm$ be the moduli space of compact Riemann surfaces of genus three.
We have a natural forgetful map $\widetilde{\mm} \to \mm$.  Let $\ll$ denote the determinant of the Hodge bundle on $\mm$, endowed with the Hodge metric, and $\nn$ the Hain-Reed line bundle on $\mm$ as discussed in Section~\ref{hainreed_bis}, endowed with its canonical hermitian metric. By a slight abuse of notation we shall also denote by $\ll$ resp.\ $\nn$ the pullbacks of the hermitian line bundles $\ll$ resp.\ $\nn$ to $\widetilde{\mm}$ and to $\widetilde{\jj}$. Let $\jj_1 \to \mm$ be the family of {\em intermediate jacobians} of the universal jacobian threefold $p \colon \jj \to \mm$, and let $\mathcal{P}_1 \to \jj_1 \times_{\mm} \jj_1$ be the canonical biextension line bundle over $\jj_1 \times_{\mm} \jj_1$, endowed with its canonical hermitian metric, as in \cite[Section~1.1]{hain}. 

We denote by $\sigma \colon \widetilde{\jj} \to \jj_1 \times_\mm \jj_1$ the map over $\mm$ given by sending a triple $(X,\alpha,D)$ to the pair consisting of the Abel-Jacobi classes of the Ceresa cycles $\varSigma_\alpha$ resp.\ $\varSigma_{\alpha D}$ in the intermediate jacobian $J_1(X)$. Write $\bb_\sigma$ for the hermitian line bundle $ \sigma^*\mathcal{P}_1$ on $\widetilde{\jj}$. We denote its canonical metric by $\|\cdot\|_{\mathrm{biext}}$. 
 The line bundle $\bb_\sigma$ admits a canonical meromorphic section $B$ given, in the fiber of $\bb_\sigma$ over a triple $(X,\alpha,D)$, by the canonical biextension mixed Hodge structure associated to the cycles $\varSigma_\alpha$ and $\varSigma_{\alpha D}$ in $\Pic^0 X$.   Following \cite[Definition~3.3.3]{hain} the archimedean height pairing  $\langle \varSigma_\alpha, \varSigma_{\alpha D} \rangle_\infty$ for a triple $(X,\alpha,D) \in \widetilde{\jj}$  is given by the formula
\begin{equation} \label{ht_pairing}
 \langle \varSigma_\alpha, \varSigma_{\alpha D} \rangle_\infty = \log \|B\|_{\mathrm{biext}} (X,\alpha,D) \, .
\end{equation}

\subsection{Connection with Frobenius' theta function} \label{connection}
By construction the meromorphic section $B$ is regular and non-vanishing whenever the cycles $\varSigma_\alpha$ and $\varSigma_{\alpha D}$ have disjoint support. If $X$ is fixed we have that $\varSigma_\alpha$ and $\varSigma_{\alpha D}$ have disjoint support whenever $D \notin |\varTheta_{2\alpha}| \cup |F_X|$. We conclude that the support of $\divisor B$ on $\widetilde{\jj}$  is equal to $|\varTheta_{0}| \cup |F|$. A more precise analysis, cf.\ \cite[Section~4.3]{hain}, yields the equality 
\begin{equation} \label{divB} \divisor B = 2\,(F-2\,\varTheta_{0}) 
\end{equation} 
of divisors on $\widetilde{\jj}$.

Let $a$ be a characteristic in degree three. Let $\theta_a$ be the theta function with characteristic $a$ and let $\phi$ be Frobenius' theta function. As in Section~\ref{mod_prop} we set $f_a=\phi/\theta_a^2$. 
We recall that $f_\a$ transforms as a meromorphic Siegel-Jacobi form of weight seven and vanishing index.
For a suitable choice of $a$ and of uniformization of $\widetilde{\jj}$ we may view $f_a$ as a meromorphic section of the line bundle $ \ll^{\otimes 7}$ on $\widetilde{\jj}$, with divisor  
\begin{equation} \label{divf}
\divisor f_a = F - 2 \, \varTheta_{0} \, . 
\end{equation} 
Combining (\ref{divB}) and (\ref{divf}) we conclude that there exists an isomorphism of holomorphic line bundles $\chi \colon \ll^{\otimes 14} \isom \bb_\sigma$ over $\widetilde{\jj}$ that identifies the meromorphic section $f_a^{\otimes 2}$ with the meromorphic section $B$. We see that via the isomorphism $\chi$ we have a natural interpretation of the Siegel-Jacobi form $f_a^{\otimes 2}$ as a biextension variation of mixed Hodge structures associated to the Ceresa cycle in genus three. 

An application of \cite[Proposition~7.3]{dj_normal} shows that the hermitian line bundle $\bb_\sigma^{\otimes 2}$ is canonically isometric with the Hain-Reed line bundle $\nn$. We thus obtain a canonical isomorphism of line bundles $\chi^{\otimes 2} \colon \ll^{\otimes 28} \isom \nn$ on $\widetilde{\jj}$. This isomorphism can be descended to $\mm$, and yields there an explicit Hain-Reed isomorphism. 

\subsection{Applying Theorem~\ref{explicit_beta}} \label{applying}
Denote by $\|\chi\| \colon \widetilde{\jj} \to \rr$ the norm of the isomorphism $\chi$. Let $\|f_a\| $ be the norm of~$f_a$ in the Hodge metric. Then clearly we have
\begin{equation} \label{log_chi} \log \|\chi\| =  \log \|B\|_{\mathrm{biext}} - 2 \log \|f_a\| 
\end{equation}
as generalized functions on $\widetilde{\jj}$. The norm $\|\chi\|$ likewise descends to $\mm$, and we see that $2\log \|\chi\| $, when viewed as a function on $\mm$, is a representative of the Hain-Reed invariant $\beta$.  

Applying Theorem~\ref{explicit_beta} we conclude that there exists a constant $c$ such that
\begin{equation} \label{log_chi_bis}
 \log \|\chi\| = -2 \int_{\widetilde{p}} \log \|\phi\| \, \mu +4 \int_{\widetilde{p}} \log \|\theta_a\| \, \mu + c 
\end{equation}
as functions on $\widetilde{\mm}$. As $\|f_a\| = \|\phi\|/\|\theta_a\|^2$ we may rewrite (\ref{log_chi_bis}) as 
\begin{equation} \label{log_chi_thrice}
\log \|\chi\| = -2 \int_{\widetilde{p}} \log \|f_a\| \, \mu + c \, . 
\end{equation}
Combining  (\ref{log_chi}) and (\ref{log_chi_thrice}) we obtain
\begin{equation} \log \|B\|_{\mathrm{biext}}   = 2 \log \|f_a\| - 2 \int_{\widetilde{p}} \log \|f_a\| \, \mu + c \, , 
\end{equation}
as generalized functions on $\widetilde{\jj}$, which can be rewritten as
\begin{equation}  \label{logB} 
 \log\|B\|_{\mathrm{biext}} = 2 \log |f_a| - 2 \int_{\widetilde{p}} \log |f_a | \, \mu  + c \, . 
\end{equation}
It follows from our work in Section~\ref{hyp_case} that when $X$ is hyperelliptic, and $\alpha$ is a Weierstrass point on $X$, so that $2\alpha$ is the class of the hyperelliptic pencil on $X$, the divisors $F_X$ and $2\,\varTheta_{2\alpha}$ coincide and the function $f_a$ on the associated jacobian is constant. 

We find that the right hand side of (\ref{logB}) is equal to $c$ over the locus of triples $(X,\alpha,D)$ where $X$ is hyperelliptic and $\alpha$ is a Weierstrass point. Now the left hand side of (\ref{logB}) vanishes identically over this locus, as the Ceresa cycle $\varSigma_\alpha$ vanishes identically if $\alpha$ is a Weierstrass point. We deduce that $c=0$ and that we have an equality
\begin{equation} \label{logB_equality} \log \|B\|_{\mathrm{biext}} = 2 \log |f_a| - 2 \int_{\widetilde{p}} \log |f_a | \, \mu
\end{equation}
of generalized functions on $\widetilde{\jj}$. 
Combining (\ref{ht_pairing}) and (\ref{logB_equality})  we conclude that for $X$ a compact Riemann surface of genus three, for $\alpha \in \Pic^1 X$ such that $4\alpha$ is canonical and for $D \in \Pic^0 X$ arbitrary we have
\begin{equation}
 \langle \varSigma_\alpha, \varSigma_{\alpha D} \rangle_\infty   = 2 \log |f_a|(D) - 2 \int_{\Pic^0 X} \log |f_a | \, \mu  
\end{equation}
for a suitable characteristic $a$. This recovers formula (4.3) from Bost's article \cite{bostduke}. 

\bibliography{refs}
\bibliographystyle{plain}

\end{document}